\documentclass[11pt,a4paper]{article}
\usepackage[T1]{fontenc}
\usepackage{lmodern}
\usepackage{amsmath,amssymb,amsthm,mathtools}
\usepackage[margin=28mm,headheight=14pt]{geometry}
\usepackage{microtype}
\usepackage{enumitem}
\usepackage{xcolor}
\usepackage{fancyhdr}
\usepackage[colorlinks=true,linkcolor=blue!45!black,citecolor=blue!45!black,
  urlcolor=blue!45!black,pdftitle={A candidate proof of the Latala--Strzelecka bound},
  pdfsubject={Research draft: Conjecture 5 and a coupled chaining construction}]{hyperref}

\newcommand{\E}{\mathbb E}
\newcommand{\Prob}{\mathbb P}
\newcommand{\R}{\mathbb R}

\newcommand{\Log}{\operatorname{Log}}
\newcommand{\diam}{\operatorname{diam}}
\newcommand{\op}{\mathrm{op}}
\newcommand{\HS}{\mathrm{HS}}

\newcommand{\ga}{\gamma_{2,2}}
\newcommand{\Bplus}[1]{B_{2,+}^{#1}}
\newcommand{\norm}[1]{\left\lVert #1\right\rVert}

\newcommand{\less}{\lesssim}

\theoremstyle{plain}
\newtheorem{theorem}{Theorem}[section]
\newtheorem{proposition}[theorem]{Proposition}
\newtheorem{lemma}[theorem]{Lemma}

\theoremstyle{remark}
\newtheorem{remark}[theorem]{Remark}
\numberwithin{equation}{section}
\setlist[enumerate]{leftmargin=*,itemsep=3pt,topsep=5pt}
\allowdisplaybreaks[1]

\title{A candidate proof of the sharp\\
Lata\l a--Strzelecka Gaussian matrix bound}
\author{{Witold Bednorz, Rafal Martynek and Rafal Meller}
\footnote{{\bf Subject classification:} 60G15, 60G17}
\footnote{{\bf Keywords and phrases:} Canonical Processes,  Invariant Method}
\footnote{Research partially supported by  Grant UMO-2022/47/B/ST1/02114}
\footnote{Institute of Mathematics, University of Warsaw, Banacha 2, 02-097 Warszawa, Poland}}
\date{}

\begin{document}
\maketitle
\thispagestyle{empty}
\begin{abstract}
We give a candidate proof of Conjecture~5 in the third arXiv version of
Lata\l a and Strzelecka's \emph{Operator $\ell_p\to\ell_q$ norms of Gaussian
matrices}. The argument uses a positive power parametrization of the original
canonical image. Its derivative profiles are controlled by four compatible
chaining hierarchies, including two hierarchies for coupled row and column
lengths. A localization argument then transfers this squared chaining control
to the canonical Gaussian metric. The resulting constants are uniform in the
dimensions and in $1\le p\le2\le q\le\infty$. We also record a precise
prefix-dependent block representation with the corrected operator budget.

\smallskip
\noindent\textbf{Status.} This is a self-contained research draft of the proposed
argument, not a claim of independent verification or a published resolution.
The new profile estimate and transfer argument are proved below; standard
chaining inputs are stated explicitly with references.
\end{abstract}

\section{Statement and notation}
Let $A=(a_{ij})_{i\le m,j\le n}$ be a real deterministic matrix, and let
$g_{ij}$ be independent standard Gaussian variables. Write
$G_A=(a_{ij}g_{ij})$. For $1\le p\le2\le q\le\infty$, put
\[
 r=p^*,\qquad s=q,\qquad
 D_r=\max_{i\le m}\norm{(a_{ij})_{j\le n}}_r,\qquad
 D_s=\max_{j\le n}\norm{(a_{ij})_{i\le m}}_s.
\]
Here $p^*$ is the conjugate exponent, and a norm with exponent $\infty$ is a
maximum. Set
\begin{equation}\label{eq:M}
 S^*=\E\max_{i,j}|a_{ij}g_{ij}|,\qquad
 M=\sqrt{r\wedge\Log n}\,D_r+
   \sqrt{s\wedge\Log m}\,D_s+S^*,
\end{equation}
where $\Log d=1\vee\log d$ for $d\ge1$. All unspecified constants below are
absolute. They do not depend on $r,s,m,n$, or $A$.

\begin{theorem}[Target bound]\label{thm:main}
For every $1\le p\le2\le q\le\infty$ and every finite real matrix $A$,
\begin{equation}\label{eq:target}
 \E\norm{G_A}_{p\to q}\less M.
\end{equation}
\end{theorem}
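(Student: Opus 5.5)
We reduce \eqref{eq:target} to a bound on the supremum of a centered Gaussian process and then control that supremum by chaining. Write
\[
 \norm{G_A}_{p\to q}=\sup_{x\in B_p^n,\ y\in B_{q^*}^m}X_{x,y},\qquad X_{x,y}=\sum_{i,j}a_{ij}g_{ij}y_ix_j .
\]
This is a canonical Gaussian process indexed by $T=\{(a_{ij}y_ix_j)_{ij}\}\subset\R^{mn}$, with the Euclidean metric $d((x,y),(x',y'))=\norm{(a_{ij}(y_ix_j-y'_ix'_j))}_2$. By Talagrand's majorizing measure theorem it suffices to show $\gamma_2(T,d)\less M$.

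\textbf{Step 1: localization.} First we remove the $S^*$ part. Because $S^*\gtrsim\max_{ij}|a_{ij}|\sqrt{\Log}$ of the number of comparable entries, we would split $A$ by the dyadic sizes of its entries, as in Lata\l a--Strzelecka's proof of the weaker bound. Within one dyadic level the entries are comparable, so $S^*$ controls the relevant $\log$ cardinalities. By symmetry of the Gaussians we may also take $x,y\ge0$.

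\textbf{Step 2: power parametrization.} We write $x_j=u_j^{2/p}$ and $y_i=v_i^{2/q^*}$ with $u\in\Bplus{n}$ and $v\in\Bplus{m}$, the positive parts of the Euclidean unit balls. In these coordinates the metric is comparable, up to factors depending only on products, to a metric of $\ga$-type on $\Bplus{n}\times\Bplus{m}$. Splitting the increment $y_ix_j-y'_ix'_j=y_i(x_j-x'_j)+x'_j(y_i-y'_i)$, Hölder's inequality bounds the two pieces by the row weights $D_r$ and the column weights $D_s$ respectively. The factors $\sqrt{r\wedge\Log n}$ and $\sqrt{s\wedge\Log m}$ come from the standard estimate $\E\norm{(g_j a_j)}_r\less\sqrt{r\wedge\Log n}\,\norm{a}_r$, which is the $\gamma_2$ functional of the dual ball.

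\textbf{Step 3: the coupled chaining.} The admissible sequences for $x$ and for $y$ cannot simply be multiplied. The weights $y_i$ seen by the $x$-increment depend on where we are in the $y$-chain, and symmetrically for the $x'_j$ in the $y$-increment. We would therefore build four hierarchies: one for $x$, one for $y$, and two for the coupled row lengths $(\sum_j a_{ij}^2x_j^2)^{1/2}$ and column lengths $(\sum_i a_{ij}^2y_i^2)^{1/2}$. Each coupled hierarchy is refined only at the levels allowed by the size budget $2^{2^n}$. We then control the derivative profiles of the power map by $D_r,D_s$ on each cell.

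\textbf{Main obstacle.} We expect Step 3 to be the hard part, and specifically the transfer of the squared control of the coupled row and column lengths back to the original metric $d$ without losing a $\log$ factor. Naive Dudley-type arguments lose exactly this factor, which is why the weaker bound has an extra logarithm. Our first attempt would be a localization or growth-condition argument in Talagrand's partition scheme. On each cell we would freeze the prefix of the $y$-chain and bound the $x$-increments using an operator budget computed from the frozen prefix. We would then sum these contributions geometrically over levels.
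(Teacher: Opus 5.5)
Your outline has the paper's skeleton: the power parametrization $x_j=u_j^{2/p}$, $y_i=v_i^{2/q^*}$, and four hierarchies that include coupled row and column lengths. But it is not a proof. The two estimates that carry the argument are missing, and you acknowledge this for the second.

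First, nothing in your plan bounds the chaining functional of the coupled lengths. The paper gets $\ga(L(K),\norm{\cdot}_\infty)\less H_s$ (Lemma~\ref{lem:lengths}) by applying the Lata\l a--Tkocz lower bound for exponential processes to the \emph{squared} lengths (Lemma~\ref{lem:positive}). It then evaluates the resulting supremum exactly, $\sup_{u,v}\sum_iE_iL_i(u,v)^2=\max_j\norm{(a_{ij}\sqrt{E_i})_i}_s^2$, whose expectation is $\less H_s^2$. That identity uses the full label $L_i=u_i^{\alpha-1}(\sum_ja_{ij}^2v_j^{2\beta})^{1/2}$, in the paper's notation where $u$ is the row variable. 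Your row lengths $(\sum_ja_{ij}^2x_j^2)^{1/2}$ drop the factor $u_i^{\alpha-1}$. To control the profile metric $\rho$ you would then still have to chain $u^{\alpha-1}$ in a sup-metric weighted by $v$-dependent lengths, which is exactly the coupling you meant to remove (compare Remark~\ref{rem:coupling}).

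Second, the transfer from squared profile control to $\gamma_2(K,d)$ is only an intention. Freezing a prefix and summing operator budgets geometrically does not address the real difficulty. The frozen tangent map changes with the cell and the level, and operator-norm control alone gives dimension-dependent covers. The paper needs several specific devices:
\begin{enumerate}
\item the regularizer $J_t$, whose residuals $\varepsilon_k,\eta_k$ telescope;
\item the geometric-midpoint bound on the Euclidean diameter of $\pi_t(B_0)$;
\item the linearization estimate of Lemma~\ref{lem:remainder};
\item Hilbert--Schmidt ellipsoid covers charged to $\lambda_{k-2}(z)$ for every original point $z$ of the cell, so that no sum over changing anchors occurs;
\item van Handel's contraction principle.
\end{enumerate}

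Your Steps~1 and~2 also do not work as stated.
\begin{itemize}
\item \emph{Dyadic splitting.} Splitting $A$ by dyadic entry sizes and applying the triangle inequality can lose a factor up to the number of levels. For $r=2$, a row with $4^k$ entries equal to $2^{-k}$, $0\le k\le N$, has $\ell_2$-norm $\sqrt{N+1}$, while the norms of its level pieces sum to $N+1$. That is a logarithmic loss of exactly the kind the target bound must avoid. The paper never splits $A$.
\item \emph{The $S^*$ term.} In the paper, $S^*$ enters only in Lemma~\ref{lem:moments}, through Gaussian concentration and a union bound over rows using $b_k\sqrt{\log(k+1)}\less S^*$. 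This is needed because $H_r$ is a maximum over rows driven by one common Gaussian vector. Your single-vector estimate $\E\norm{(g_ja_j)_j}_r\less\sqrt{r\wedge\Log n}\,\norm{a}_r$ does not cover that maximum.
\item \emph{The H\"older split.} Splitting $y_ix_j-y'_ix'_j$ by H\"older decouples too crudely. For example, bounding $\sum_ia_{ij}^2y_i^2\le D_s^2$ leaves the metric $D_s\norm{x-x'}_2$, and $\gamma_2(B_2^n,\norm{\cdot}_2)\asymp\sqrt n$.
\end{itemize}
The canonical metric $d$ is not itself of $\ga$ type; only the derivative profiles are. The paper recovers $d$ as a linear part $T_b(x-y)$, charged to ellipsoid entropy with budget $R$, plus a remainder $(\rho(x,b)+\rho(y,b))\norm{x-y}_2$, charged to $\ga(K,\rho)$.
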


This is the upper estimate formulated as Conjecture~5 in
\cite[Conjecture~5]{LS}. It is stronger than a bound whose implicit constant
may depend arbitrarily on $p$ and $q$. The argument below proves the target
bound through the two estimates
\begin{equation}\label{eq:roadmap}
 \ga(K,\rho)\less M,
 \qquad
 \gamma_2(K,d)\less R+\ga(K,\rho),
 \qquad R\less M.
\end{equation}
The set $K$, the canonical metric $d$, and the profile metric $\rho$ are defined
in Section~\ref{sec:geometry}. No change of the canonical image is made in
these estimates.

\section{Chaining conventions and standard inputs}\label{sec:tools}
For a pseudometric space $(T,d)$, an admissible sequence is a sequence of
nested partitions $(\mathcal A_k)_{k\ge0}$ such that
$\mathcal A_0=\{T\}$ and $|\mathcal A_k|\le2^{2^k}$. Define
\begin{equation}\label{eq:gamma}
 \gamma_{a,b}(T,d)=\inf_{(\mathcal A_k)}
 \sup_{x\in T}\left(
   \sum_{k\ge0}2^{kb/a}\diam_d(\mathcal A_k(x))^b
 \right)^{1/b},
 \qquad \gamma_a=\gamma_{a,1}.
\end{equation}
Zero-distance points may be identified. Let $e_k(T,d)$ denote the infimum
of radii of covers by at most $2^{2^k}$ balls. Allowing centers in the ambient
space or requiring centers in $T$ changes estimates by at most a factor two.
Likewise, the conventions with strict cardinality inequalities are equivalent
after a fixed shift of the level.

We use the following established facts. The forms stated here are sufficient
for the rest of the manuscript.
\begin{enumerate}[label=\textup{(\roman*)}]
\item\label{tool:gaussian} For a centered Gaussian process with canonical metric
$d$, the Gaussian chaining upper bound is
\begin{equation}\label{eq:gaussian}
 \E\sup_{x\in T}(X_x-X_{x_0})\less\gamma_2(T,d).
\end{equation}
\item\label{tool:contraction} The contraction principle
\cite[Theorem~3.1]{vH} says that, for a sufficiently small absolute
$\theta>0$, the inequalities
\begin{equation}\label{eq:contraction-hyp}
 e_k(B,d)\le\theta\diam_d B+\sup_{x\in B}s_k(x)
 \quad(k\ge0,\ B\subset T)
\end{equation}
imply
\begin{equation}\label{eq:contraction}
 \gamma_2(T,d)\less\sup_{x\in T}
   \sum_{k\ge0}2^{k/2}s_k(x).
\end{equation}
\item\label{tool:ellipsoid} For a linear map $T$ between finite-dimensional
Euclidean spaces, Hilbert ellipsoid entropy gives
\begin{equation}\label{eq:ellipsoid}
 \sum_{k\ge0}2^k e_k(TB_2,\norm{\cdot}_2)^2
 \less\norm{T}_{\HS}^2.
\end{equation}
This is the entropy estimate used in \cite[Section~7.3]{vH}.
\item\label{tool:ball} If $N$ is a seminorm on $\R^d$ and $g$ is a standard
Gaussian vector, then
\begin{equation}\label{eq:ball-chain}
 \ga(B_2^d,N)\less\E N(g).
\end{equation}
Indeed, the uniform convexity estimate
\cite[Theorem~5.8]{vH} bounds the left side by
$C\sup_k2^{k/2}e_k(B_2^d,N)$; dual Sudakov bounds this supremum by
$C\E N(g)$. A seminorm follows by a limit from norms.
\end{enumerate}

The Gaussian upper bound and the entropy facts are classical; they are also
discussed in \cite{vH}. One additional input is the chaining lower bound for
canonical symmetric exponential processes in \cite[Theorem~1]{LT}. Its exact
consequence needed here is proved next.

\begin{lemma}[Positive exponential chaining]\label{lem:positive}
Let $L\subset\R_+^d$ be compact, and let $E_1,\ldots,E_d$ be independent
exponential variables of mean one. Then
\begin{equation}\label{eq:positive}
 \ga(L,\norm{\cdot}_\infty)^2
 \less\E\sup_{\ell\in L}\sum_{i=1}^d\ell_i^2E_i.
\end{equation}
\end{lemma}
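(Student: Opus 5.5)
We reduce \eqref{eq:positive} to the Lata{\l}a--Tkocz chaining lower bound \cite[Theorem~1]{LT} for canonical symmetric exponential processes. That theorem states that for a set $T\subset\R^d$ and independent symmetric exponential (Laplace) variables $\xi_i$,
\[
 \E\sup_{t\in T}\sum_i t_i\xi_i\gtrsim\gamma_2(T,\norm{\cdot}_2)+\gamma_1(T,\norm{\cdot}_\infty).
\]
The $\gamma_1$ part is the one relevant here.

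\textbf{Step 1: symmetrization.} The first step is to pass from the positive weights $\ell_i^2$ to a symmetric process. Write $E_i\stackrel{d}{=}\tfrac12(g_i^2+g_i'^2)$, or more simply use $E_i\ge|\xi_i|$ in distribution up to constants. Since all $\ell_i^2\ge0$, we have
\[
 \E\sup_{\ell}\sum\ell_i^2E_i\ \ge\ \E\sup_\ell\sum_i\ell_i^2\varepsilon_iE_i
\]
for a Rademacher sign sequence. To justify this, condition on $(E_i)$, note that the right side is at most the left side since $\sum\ell_i^2\varepsilon_iE_i\le\sum\ell_i^2E_i$ pointwise, and observe that $\varepsilon_iE_i$ is symmetric exponential. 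So the right side is the canonical exponential process indexed by $T=L^2:=\{(\ell_i^2)_i:\ell\in L\}$.

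\textbf{Step 2: apply \cite[Theorem~1]{LT}.} This gives
\[
 \E\sup_\ell\sum\ell_i^2E_i\gtrsim\gamma_1(L^2,\norm{\cdot}_\infty).
\]
Here $\gamma_1=\gamma_{1,1}$ has weights $2^k$.

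\textbf{Step 3: compare $\gamma_1(L^2,\norm{\cdot}_\infty)$ with $\ga(L,\norm{\cdot}_\infty)^2$.} This is the main obstacle. We use two facts.

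\begin{itemize}
\item The metric comparison is pointwise on coordinates: $|a^2-b^2|=|a-b|(a+b)\ge|a-b|^2$ for $a,b\ge0$. Hence $\norm{\ell^2-\ell'^2}_\infty\ge\norm{\ell-\ell'}_\infty^2$.
\item The map $\ell\mapsto\ell^2$ is injective on $\R_+^d$, so admissible partitions of $L^2$ pull back to admissible partitions of $L$.
\end{itemize}

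Combining these, for an admissible sequence $(\mathcal A_k)$ on $L$ induced from an optimal one for $L^2$,
\[
 \diam_\infty(\mathcal A_k(\ell))^2\le\diam_\infty(\mathcal A_k(\ell^2)).
\]
Summing with weights $2^k$ gives
\[
 \sup_\ell\sum_k2^k\diam_\infty(\mathcal A_k(\ell))^2\le\gamma_1(L^2,\norm{\cdot}_\infty)\ (1+\varepsilon).
\]
The left side is exactly $\ga(L,\norm{\cdot}_\infty)^2$ by the definition \eqref{eq:gamma} with $a=b=2$, which concludes the argument.

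\textbf{What must be checked carefully.} The delicate point is that \cite[Theorem~1]{LT} is stated in the form giving $\gamma_1(T,\norm\cdot_\infty)$, rather than only a mixed functional or a Bernoulli-type characterization. If the cited result instead gives a bound via partitions with $\ell_\infty$ replaced by a Talagrand-type family of distances, I would first try to extract the $\gamma_1(\cdot,\norm\cdot_\infty)$ term. This works because the exponential tail at level $2^k$ dominates the $\ell_\infty$ jump (one coordinate of size $\sim2^k$ occurs with probability $e^{-2^k}$).

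A compactness and finiteness reduction, approximating $L$ by finite subsets and using monotone convergence of both sides, would handle the sup over the compact set.
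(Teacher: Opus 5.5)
Your proposal is correct and follows essentially the same route as the paper: pull back partitions through $\ell\mapsto\ell^2$ using $|x-y|^2\le|x^2-y^2|$ to get $\ga(L,\norm{\cdot}_\infty)^2\le\gamma_1(\{\ell^2:\ell\in L\},\norm{\cdot}_\infty)$, use $\sum_i q_i\varepsilon_iE_i\le\sum_i q_iE_i$ pointwise for $q_i\ge0$, and invoke the Lata\l a--Tkocz lower bound for the symmetric exponential process. The one step to tighten is the reduction you hedge on at the end. The cited theorem is phrased with the $L_{2^k}$ increment metrics, and the $\gamma_2+\gamma_1$ form you quote is the classical exponential-process characterization, which also suffices. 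To pass from the $L_{2^k}$ metrics to $\gamma_1(\cdot,\norm{\cdot}_\infty)$, the paper obtains $\norm{\sum_j h_jZ_j}_{L_t}\gtrsim t\norm{h}_\infty$ by conditional Jensen, integrating out the other centered coordinates. That rules out the cancellation that your tail-probability heuristic does not address.
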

\begin{proof}
Write $Q=\{(\ell_i^2)_i:\ell\in L\}$. Nonnegativity gives
\[
 |x-y|^2\le|x^2-y^2|\quad(x,y\ge0).
\]
Pulling partitions of $Q$ back to $L$ therefore yields
\begin{equation}\label{eq:square-pullback}
 \ga(L,\norm{\cdot}_\infty)^2
 \le\gamma_1(Q,\norm{\cdot}_\infty).
\end{equation}
Let $Z_i=\varepsilon_iE_i$, where the $\varepsilon_i$ are independent
Rademacher signs, also independent of the $E_i$. For $h\in\R^d$ and $t\ge1$,
conditional Jensen's inequality, applied after fixing $Z_i$, gives
\[
 \norm{\sum_jh_jZ_j}_{L_t}\ge |h_i|\norm{Z_i}_{L_t}
 \gtrsim t|h_i|.
\]
Consequently $\norm{\sum_jh_jZ_j}_{L_t}\gtrsim t\norm h_\infty$.
The exponential variables satisfy the regular moment assumptions of
\cite[Theorem~1]{LT}, with absolute parameters; variance normalization only
rescales by an absolute factor. That theorem bounds the partition functional
formed with the $L_{2^k}$ increment metrics by the expected range of the
process. It follows that
\[
 \gamma_1(Q,\norm{\cdot}_\infty)
 \less\E\sup_{q,q'\in Q}\sum_i(q_i-q'_i)Z_i
 =2\E\sup_{q\in Q}\sum_iq_iZ_i.
\]
Since $q_i\ge0$, this last supremum is at most
$\sup_{q\in Q}\sum_iq_iE_i$ pointwise. Together with
\eqref{eq:square-pullback}, this proves the claim. All processes here are
continuous on compact finite-dimensional sets, so the usual separability
convention causes no additional issue.
\end{proof}

\section{The canonical image and derivative profiles}\label{sec:geometry}
The zero matrix is immediate, so it may be excluded throughout the argument.
Replacing $a_{ij}$ by $|a_{ij}|$ does not change the distribution of $G_A$.
The canonical coefficient images are related by the fixed Euclidean isometry
that multiplies coordinate $(i,j)$ by the sign of $a_{ij}$ (take sign one
at a zero entry). Every image representation below can therefore be
transported back to the original real matrix. We henceforth suppose
$a_{ij}\ge0$. Put
\begin{equation}\label{eq:power-map}
 K=\Bplus m\times\Bplus n,\qquad
 \alpha=2-\frac2s,\qquad\beta=2-\frac2r,
 \qquad F(u,v)_{ij}=a_{ij}u_i^\alpha v_j^\beta.
\end{equation}
Both exponents belong to $[1,2]$. Powers are taken coordinatewise.
Since $\alpha s^*=2$ and $\beta p=2$, the maps $u\mapsto u^\alpha$ and
$v\mapsto v^\beta$ are onto the positive parts of $B_{s^*}^m$ and
$B_p^n$, respectively. Thus $F(K)$ is exactly the positive canonical image.

For $z=(u,v)\in K$, define
\begin{equation}\label{eq:profiles}
 U_{ij}(z)=\alpha a_{ij}u_i^{\alpha-1}v_j^\beta,
 \qquad
 V_{ij}(z)=\beta a_{ij}u_i^\alpha v_j^{\beta-1}.
\end{equation}
We use $t^0=1$, including at $t=0$. This is the derivative convention for a
linear coordinate. The derivative $T_z=DF(z)$ acts by
\[
 (T_z(h,k))_{ij}=U_{ij}(z)h_i+V_{ij}(z)k_j.
\]
The metrics used throughout are
\begin{align}
 d(z,z')&=\norm{F(z)-F(z')}_2,\label{eq:d}\\
 \rho(z,z')&=
   \max_i\norm{(U_{ij}(z)-U_{ij}(z'))_j}_2
  +\max_j\norm{(V_{ij}(z)-V_{ij}(z'))_i}_2.\label{eq:rho}
\end{align}
The target norm in \eqref{eq:d} is the Euclidean, or Frobenius, norm.
These are continuous pseudometrics. In particular,
\begin{equation}\label{eq:derivative-op}
 \norm{T_z-T_{z'}}_\op\le\rho(z,z').
\end{equation}

\begin{lemma}[Trace bound]\label{lem:trace}
With $R=\sup_{z\in K}\norm{T_z}_\HS$, one has
\begin{equation}\label{eq:trace}
 R^2\le4(D_r^2+D_s^2).
\end{equation}
Moreover $\sup_{z\in K}\norm{F(z)}_2\le a_*:=\max_{i,j}a_{ij}\le R$.
\end{lemma}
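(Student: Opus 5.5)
We split $\|T_z\|_\HS^2=\sum_{i,j}(U_{ij}(z)^2+V_{ij}(z)^2)$ into a $U$-part and a $V$-part, bound each by Hölder's inequality in one index followed by the normalization of $u$ or $v$ in the other, and then treat the claims about $F(z)$ and $a_*$ separately. Throughout, $z=(u,v)\in K$, so $u\in\Bplus m$ and $v\in\Bplus n$ have nonnegative coordinates with $\sum_iu_i^2\le1$ and $\sum_jv_j^2\le1$.

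For the $U$-part, fix $i$ and write $\sum_jU_{ij}^2=\alpha^2u_i^{2\alpha-2}\sum_ja_{ij}^2v_j^{2\beta}$. Since $2\beta=4-4/r$, we may pair $(a_{ij}^2)_j$ with $(v_j^{2\beta})_j$ by Hölder with exponents $r/2$ and its conjugate $(r/2)^*=r/(r-2)$ (we have $r\ge2$ because $p\le2$). This gives $\norm{(a_{ij}^2)_j}_{r/2}=\norm{(a_{ij})_j}_r^2\le D_r^2$. On the other side, $2\beta\cdot r/(r-2)=4(r-1)/(r-2)\ge2$, so $\norm{(v_j^{2\beta})_j}_{r/(r-2)}=\norm{v}_{2\beta r/(r-2)}^{2\beta}\le\norm v_2^{2\beta}\le1$, using that $\ell_2$ norms dominate $\ell_t$ norms for $t\ge2$; the boundary cases $r=2$ and $r=\infty$ are read as limits. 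Hence $\sum_jU_{ij}^2\le\alpha^2u_i^{2\alpha-2}D_r^2$. Summing over $i$, since $2\alpha-2\in[0,2]$, Hölder gives $\sum_iu_i^{2\alpha-2}\le m^{1-(\alpha-1)}\norm u_2^{2\alpha-2}$.

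This last bound grows with $m$ unless $\alpha=2$, and dealing with it is the main obstacle. The exponent $2\alpha-2=2-4/s$ matches exactly the bound on $D_r$, so a naive estimate loses a dimension factor. I would therefore try pairing differently: combine $u_i^{2\alpha-2}$ with the column sums instead of summing the row bounds, applying Hölder in $i$ with exponents $s/2$ and $s/(s-2)$. That produces $\sum_ia_{ij}^2u_i^{2\alpha-2}\le D_s^2\,\norm{(u_i^{2\alpha-2})_i}_{s/(s-2)}$, and $(2-4/s)\cdot s/(s-2)=2$, so this factor is at most $\norm u_2^2\le1$. Then summing over $j$ against $v_j^{2\beta}$ gives $\sum_jv_j^{2\beta}\le\norm v_2^{2\beta}\le1$ because $2\beta\ge2$. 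So the correct pairing for the $U$-part is the column norm: $\sum_{ij}U_{ij}^2\le\alpha^2D_s^2\le4D_s^2$. Symmetrically, pairing $v_j^{2\beta-2}$ with row norms via exponents $r/2$ and $r/(r-2)$ (here $(2\beta-2)\,r/(r-2)=2$) and then using $\sum_iu_i^{2\alpha}\le1$ yields $\sum_{ij}V_{ij}^2\le\beta^2D_r^2\le4D_r^2$. Adding the two parts and taking the supremum over $K$ gives \eqref{eq:trace}. The only care needed is at $\alpha=1$ or $\beta=1$ (i.e.\ $s=2$ or $r=2$), where the convention $t^0=1$ makes the Hölder step read $\sum_ia_{ij}^2\le D_s^2$ directly.

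For the second claim, $\norm{F(z)}_2^2=\sum_{ij}a_{ij}^2u_i^{2\alpha}v_j^{2\beta}\le a_*^2\sum_iu_i^{2\alpha}\sum_jv_j^{2\beta}\le a_*^2$, since $2\alpha,2\beta\ge2$. Finally, for $a_*\le R$, pick $(i_0,j_0)$ attaining $a_*$ and take $z$ with $u=e_{i_0}$ and $v=e_{j_0}$. Then $U_{i_0j_0}=\alpha a_*$ with $\alpha\ge1$, so $\norm{T_z}_\HS\ge a_*$.
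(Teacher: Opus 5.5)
Your final argument is correct and is the paper's proof. After you discard the row-wise pairing (which does lose a dimension factor), you pair $u_i^{2-4/s}$ with the column norm $\norm{(a_{ij})_i}_s$ by H\"older, then use $\sum_j v_j^{2\beta}\le1$, transpose for the $V$-part, bound $\norm{F(z)}_2$ by $a_*$ the same way, and test at $u=e_{i_0}$, $v=e_{j_0}$ to get $a_*\le R$; in a write-up you can simply drop the abandoned first attempt.
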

\begin{proof}
For fixed $j$, H\"older's inequality gives
\[
 \sum_i a_{ij}^2u_i^{2-4/s}
 \le\norm{(a_{ij})_i}_s^2\qquad(u\in\Bplus m).
\]
At $s=2$ this is an equality independent of $u$, under the zero-power
convention; at $s=\infty$ it is the maximum bound. Since
$\sum_jv_j^{2\beta}\le1$, it follows that
$\sum_{i,j}U_{ij}(z)^2\le\alpha^2D_s^2$.
The transposed argument gives
$\sum_{i,j}V_{ij}(z)^2\le\beta^2D_r^2$.
These sums add to $\norm{T_z}_\HS^2$, proving \eqref{eq:trace}.
Also
\[
 \norm{F(u,v)}_2^2\le a_*^2
   \Big(\sum_i u_i^{2\alpha}\Big)
   \Big(\sum_j v_j^{2\beta}\Big)\le a_*^2.
\]
Evaluating a derivative at $u=e_i,v=e_j$ for a largest entry shows
$a_*\le R$.
\end{proof}

\section{Gaussian moments for common-coordinate profiles}\label{sec:moments}
Let $(g_i)_i$ and $(g_j)_j$ denote standard Gaussian vectors of the indicated
dimensions. Define
\begin{equation}\label{eq:H}
 H_s=\left(\E\Big[\max_j\norm{(a_{ij}g_i)_i}_s\Big]^2\right)^{1/2},
 \qquad
 H_r=\left(\E\Big[\max_i\norm{(a_{ij}g_j)_j}_r\Big]^2\right)^{1/2}.
\end{equation}
The same coordinate Gaussian is used across all columns in $H_s$, and across
all rows in $H_r$. Independence between these row or column norms is not
assumed.

\begin{lemma}\label{lem:moments}
Uniformly for $r,s\in[2,\infty]$,
\begin{equation}\label{eq:H-bound}
 H_r\less\sqrt{r\wedge\Log n}\,D_r+S^*,
 \qquad
 H_s\less\sqrt{s\wedge\Log m}\,D_s+S^*.
\end{equation}
\end{lemma}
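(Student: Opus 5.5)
By the symmetry $A\leftrightarrow A^T$, $(r,n)\leftrightarrow(s,m)$ it suffices to prove the bound for $H_s$. Write $X_j(g)=\norm{(a_{ij}g_i)_i}_s$ and $X=\max_jX_j$, all driven by the same Gaussian vector $(g_i)_{i\le m}$.

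\emph{Step 1: reduce $H_s$ to $\E X$.} Each $X_j$ is a Lipschitz function of $g$. Its Lipschitz constant is
\[
\sigma_j=\sup_{\norm w_{s^*}\le1}\Big(\sum_ia_{ij}^2w_i^2\Big)^{1/2}=\norm{(a_{ij})_i}_{2s/(s-2)},
\]
by H\"older applied to $w^2\in B_{s^*/2}$. Since $s^*\le2$ we have $\sigma_j\le a_*$. Hence $X$ is $a_*$-Lipschitz, and Gaussian concentration gives $H_s\le\E X+Ca_*$. Moreover $a_*\le\E\max_{ij}a_{ij}|g_{ij}|/\E|g|\less S^*$. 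So it remains to bound $\E X$.

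\emph{Step 2: truncate the exponent.} If $s>\Log m$, put $s'=\Log m$. Then $\norm x_s\le\norm x_{s'}$ pointwise. Also $\norm{(a_{ij})_i}_{s'}\le m^{1/s'-1/s}\norm{(a_{ij})_i}_s\le e\,D_s$ on $\R^m$, so $D_{s'}\le eD_s$. It therefore suffices to treat $s\le\Log m$, with target $\sqrt s\,D_s+S^*$.

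\emph{Step 3: a single column.} For each fixed $j$, Jensen and Gaussian moments give
\[
\E X_j\le\Big(\sum_ia_{ij}^s\E|g|^s\Big)^{1/s}\less\sqrt s\,D_s .
\]
Concentration then yields $\Prob(X_j>C\sqrt sD_s+t)\le e^{-t^2/2\sigma_j^2}$.

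\emph{Step 4: the maximum over $j$ (main obstacle).} A naive union bound costs $a_*\sqrt{\Log n}$, which is not dominated by $S^*$. Instead I would integrate the tail
\[
\E X\le C\sqrt sD_s+\int_0^\infty\min\Big(1,\sum_je^{-t^2/2\sigma_j^2}\Big)\,dt
\]
and compare it with the known two-sided description of $S^*$. That description is $S^*\asymp\int_0^\infty\min\bigl(1,\sum_{i,j}e^{-t^2/2a_{ij}^2}\bigr)dt$, up to absolute constants, which is the standard estimate for maxima of independent Gaussians.

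The key needed inequality is that the sum over columns $\sum_je^{-t^2/2\sigma_j^2}$ is dominated by the sum over entries $\sum_{i,j}e^{-ct^2/a_{ij}^2}$, up to an additive term living at scale $t\less\sqrt sD_s$. To get it, I would bound $\sigma_j$ by interpolation between $\norm{(a_{ij})_i}_\infty$ and $\norm{(a_{ij})_i}_s\le D_s$, using $\tfrac{s-2}{2s}$-weights. Splitting each column into entries above and below a threshold $\tau\approx t/\sqrt{\Log(\cdot)}$ should then work as follows:
\begin{itemize}
\item the small entries contribute at most $\sqrt sD_s$ to $\sigma_j$-scale deviations, using $s\le\Log m$;
\item a column with $\sigma_j$ large must contain an entry $a_{ij}\gtrsim\sigma_j$ in the relevant range, because the $\ell_{2s/(s-2)}$ norm is within a constant of the $\ell_\infty$ norm once the number of nonnegligible entries is at most $e^{s}$.
\end{itemize}
This comparison is the delicate point, and I expect the casework on the number of large entries per column to be where constants must be checked carefully. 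If it fails, the fallback is to bound $\E X$ by $\E\max_j X_j^{\le\tau}+\E\max_{i,j}a_{ij}|g_i|\mathbf 1_{a_{ij}>\tau}$. Here $X_j^{\le\tau}$ denotes the $\ell_s$ norm of the column restricted to entries $a_{ij}\le\tau$, and the second term is compared with $S^*$ directly. Note that $g_i$ is common across the columns $j$, so this second maximum is at most a maximum over rows of independent Gaussians, hence $\less S^*$.
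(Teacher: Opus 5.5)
Your Step~4, which carries the whole difficulty, is not actually proved. The ``key needed inequality'' is only sketched, and the two bullets are heuristics. The fallback does not close the argument either: the small-entry part $\max_j X_j^{\le\tau}$ still needs a union bound over all $n$ columns, and replacing the $\ell_s$ norm of the large-entry part by a maximum loses a factor $N_j^{1/s}$, where $N_j$ is the number of large entries in column $j$.

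The step only looked delicate because of an error in Step~1. The expression you wrote, $\sup_{\norm{w}_{s^*}\le1}(\sum_i a_{ij}^2w_i^2)^{1/2}$, is the right Lipschitz constant of $X_j$. But it equals $b_j:=\max_i a_{ij}$, not $\norm{(a_{ij})_i}_{2s/(s-2)}$. Since $s^*\le2$ one has $\norm{w}_2\le\norm{w}_{s^*}$, so the supremum is at most $b_j$, and a coordinate vector attains it. The quantity $\norm{(a_{ij})_i}_{2s/(s-2)}$ is instead the Lipschitz constant of $g\mapsto\norm{(a_{ij}g_i)_i}_{s^*}$, so you dualized against the wrong ball. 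It is also incompatible with your own, correct, claim $\sigma_j\le a_*$.

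With $\sigma_j=b_j$ the missing comparison is immediate and has no additive term. If $a_{i(j)j}=b_j$, then each summand $e^{-t^2/2b_j^2}$ is one of the summands $e^{-t^2/2a_{ij}^2}$, so $\sum_je^{-t^2/2\sigma_j^2}\le\sum_{i,j}e^{-t^2/2a_{ij}^2}$. The tail-integral description of $S^*$ that you quote, which is valid up to absolute constants, then finishes Step~4.

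This is exactly the idea of the paper's proof, written with an ordering argument instead of tail integrals. The paper keeps the individual Lipschitz constants $b_j$ in the union bound and compares with $B=\E\max_jb_j|\xi_j|$ for independent $\xi_j$. It notes $B\le S^*$, because one maximizing entry per column gives distinct, hence independent, entries of the array. It then sums $\sum_j\exp(-x^2B^2/2b_j^2)$ using $B\gtrsim b_k\sqrt{\log(k+1)}$ for the decreasing rearrangement.

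The rest of your outline is sound:
\begin{itemize}
\item the $L_2$-to-mean reduction with $a_*\le\sqrt{\pi/2}\,S^*$;
\item the single-column moment bound;
\item the truncation of the exponent at $\Log m$, which replaces the paper's estimate $\norm{\cdot}_s\le e\norm{\cdot}_\infty$ for $s\ge\Log m$. Take $s'=\max(2,\Log m)$ so that $s'\ge2$ also when $m<e^2$.
\end{itemize}
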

\begin{proof}
It suffices to prove the row estimate. First take $2\le t<\infty$ and put
$f_i(g)=\norm{(a_{ij}g_j)_j}_t$, $b_i=\max_j a_{ij}$, and
$D_t^{\mathrm{row}}=\max_i\norm{(a_{ij})_j}_t$.
Because $t\ge2$, $f_i$ is $b_i$-Lipschitz for Euclidean distance. Also
\begin{equation}\label{eq:mean-row}
 \E f_i\le(\E f_i^t)^{1/t}
 =\norm{g_1}_{L_t}\norm{(a_{ij})_j}_t
 \less\sqrt t\,D_t^{\mathrm{row}}.
\end{equation}
Discard zero $b_i$ and reorder the remaining values decreasingly. For
independent standard Gaussians $\xi_i$, set
$B=\E\max_i b_i|\xi_i|$. The elementary lower bound for the maximum of
$k$ Gaussian absolute values gives
\begin{equation}\label{eq:ordered}
 B\gtrsim b_k\sqrt{\log(k+1)}\quad(k\ge1).
\end{equation}
Choosing one maximizing entry in each row gives independent entries of the
original array; hence $B\le S^*$.

Gaussian concentration and a union bound, which do not require independence
of the $f_i$, now give, for all sufficiently large absolute $x$,
\begin{align*}
 \Prob\left\{\max_i(f_i-\E f_i)>xB\right\}
 &\le\sum_i\exp\left(-\frac{x^2B^2}{2b_i^2}\right)\\
 &\le\sum_{i\ge1}(i+1)^{-c x^2}
 \le C e^{-c' x^2}.
\end{align*}
Integrating this tail and using \eqref{eq:mean-row} yields
\begin{equation}\label{eq:unsaturated}
 \norm{\max_i f_i}_{L_2}\less\sqrt t\,D_t^{\mathrm{row}}+S^*.
\end{equation}

If $t\ge\Log n$, including $t=\infty$, then
\[
 \max_i\norm{(a_{ij}g_j)_j}_t
 \le e\max_{i,j}a_{ij}|g_j|
 =e\max_j c_j|g_j|,\qquad c_j=\max_i a_{ij}.
\]
The last maximum has $L_2$ norm at most an absolute multiple of its mean.
For example, Gaussian concentration bounds its variance by
$(\max_jc_j)^2$, while its mean is at least
$\sqrt{2/\pi}\max_jc_j$. Choosing one entry in each column shows that this
mean is at most $S^*$. Thus the row maximum has $L_2$ norm at most $CS^*$
in this range.
Combining this observation with \eqref{eq:unsaturated} proves the first
inequality in \eqref{eq:H-bound}. Transposition proves the second.
\end{proof}

\section{A single hierarchy for the coupled profiles}\label{sec:coupling}
The following estimate is the main profile bound.
\begin{proposition}\label{prop:profile}
For the metric \eqref{eq:rho},
\begin{equation}\label{eq:profile-main}
 P:=\ga(K,\rho)\less H_r+H_s\less M.
\end{equation}
\end{proposition}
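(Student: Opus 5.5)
The plan is to split $\rho$ into its two summands and each summand into two pieces, one for the variation in $u$ and one for the variation in $v$. Each piece gets its own chaining hierarchy, and the four hierarchies are merged at the end. For merging I will use the standard fact that if $(\mathcal A^{(1)}_k),\dots,(\mathcal A^{(4)}_k)$ are admissible, then the common refinement taken at level $k-2$ (with $\{K\}$ at the first levels) is admissible. Hence $\ga(K,\rho_1+\dots+\rho_4)\less\sum_\ell\ga(K,\rho_\ell)$ whenever $\rho\le\sum_\ell\rho_\ell$. By symmetry under transposition it suffices to treat the $U$-summand, $\rho_U(z,z')=\max_i\norm{(U_{ij}(z)-U_{ij}(z'))_j}_2$. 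Writing $U_{ij}(z)-U_{ij}(z')=\alpha a_{ij}\bigl(u_i^{\alpha-1}-u_i'^{\alpha-1}\bigr)v_j^\beta+\alpha a_{ij}u_i'^{\alpha-1}\bigl(v_j^\beta-v_j'^\beta\bigr)$, I get $\rho_U\le\rho_{U,1}+\rho_{U,2}$. Note that the $U$-summand carries the column data, which is where $H_s$ and $D_s$ enter, as in Lemma~\ref{lem:trace}.

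For each piece the device is the one in Lemma~\ref{lem:positive}: pass to squares. Since all quantities are nonnegative, $|x-y|^2\le|x^2-y^2|$ gives, coordinatewise,
\[
\rho_{U,2}(z,z')^2\le\alpha^2\max_i\sum_j a_{ij}^2\,\bigl|v_j^{2\beta}-v_j'^{2\beta}\bigr|,
\]
and a similar bound for $\rho_{U,1}$ in terms of $u_i^{2\alpha-2}$, weighted by row sums of $a_{ij}^2v_j^{2\beta}$. This converts $\ga(\cdot,\rho_{U,\ell})^2$ into $\gamma_1$ of a positive set of squared profiles $Q$, measured in a weighted $\max$-of-$\ell_1$ norm $N$. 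I then want the analogue of \eqref{eq:positive} for $N$ in place of $\norm{\cdot}_\infty$. To get it, I will apply \cite[Theorem~1]{LT} to the symmetric exponential process indexed by $Q$ with the coefficient array built from $a_{ij}^2$. Its $L_t$ increment metric dominates $t\,N$ by the same conditional-Jensen argument as in the proof of Lemma~\ref{lem:positive}, after a union over the indices inside the maximum. The right-hand side is then an expected supremum of a positive exponential sum. Using $E\overset{d}{=}(g^2+g'^2)/2$ and the H\"older duality $v^{2\beta}\in B_{p/2,+}$, $(p/2)^*=r/2$ (respectively $u^{2\alpha}\in B_{s^*/2,+}$ with conjugate $s/2$), this supremum should collapse to $\E\max\norm{(a_{ij}g)}^2$ with a common coordinate Gaussian, that is, to $H_r^2$ or $H_s^2$. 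The final inequality $H_r+H_s\less M$ is Lemma~\ref{lem:moments}.

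The main obstacle is the coupled piece, where a single coordinate sequence (say $u_i^{\alpha-1}$) multiplies row weights $\norm{(a_{ij}v_j^\beta)_j}_2$ that depend on the other variable. The maximum over $i$ does not factor there, so the squared set is not a product and the duality computation above does not apply directly. The first thing I would try is to bound the weight by its supremum over $v$ and use $\sum_ja_{ij}^2v_j^{2\beta}\le D_r^2$ only where the exponent bookkeeping is $u$-free. Otherwise, I would run the chaining on the product profile $(u_i^{2\alpha-2}\,a_{ij}^2v_j^{2\beta})_{ij}$ directly, which requires a separate hierarchy for the coupled row and column lengths. I would then check that the resulting exponential supremum is still dominated by $H_s^2+H_r^2$ rather than by an independent-coordinate quantity. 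This second route is where I expect the real work, and the "two hierarchies for coupled lengths" to be needed.
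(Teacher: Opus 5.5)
\paragraph{Verdict.} The proposal has genuine gaps. Your merging step (refining four admissible hierarchies at level $k-2$) is exactly what the paper does, but both kinds of pieces you feed into it fail.

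\paragraph{The coupled term.} This is the decisive gap. You leave this piece open, and neither fallback works.

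Freezing the row weight $\norm{(a_{ij}v_j^\beta)_j}_2$ at its supremum (e.g.\ via $\sum_ja_{ij}^2v_j^{2\beta}\le D_r^2$) destroys the coupling. Take $A=I_n$, $p=2$, $q=4$. Then $\alpha-1=\tfrac12$, so the frozen set $\{u^{\alpha-1}:u\in\Bplus n\}$ is $B_{4,+}^n$. This set contains the $2^n$ points $n^{-1/4}\mathbf 1_S$, pairwise at $\ell_\infty$-distance $n^{-1/4}$. Taking $2^k\asymp n$ gives $\ga(B_{4,+}^n,\norm{\cdot}_\infty)\ge 2^{k/2}e_k\gtrsim n^{1/4}$, whereas $M\asymp\sqrt{\log n}$.

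Chaining the $(i,j)$-indexed product profile instead leads back to a $\max_i\ell_1(j)$ norm, which fails for the reason in the next paragraph.

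The missing idea is to sharpen your split $(a-b)x+b(x-y)$ by the reverse triangle inequality: $\norm{ax-by}\le|a\norm x-b\norm y|+2\min(a,b)\norm{x-y}$, with $a=u_i^{\alpha-1}$ and $x=(a_{ij}v_j^\beta)_j$. This replaces the coupled term by $|L_i(z)-L_i(z')|$ for the coupled length $L_i=u_i^{\alpha-1}\norm{(a_{ij}v_j^\beta)_j}_2$, plus a pure $v$-variation. The vector $L(z)$ is indexed by $i$ only, is measured in $\norm{\cdot}_\infty$, and is refined as a whole, never separating $u$ from $v$. So Lemma~\ref{lem:positive} applies to it.

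The resulting supremum is computed exactly. H\"older in $u$ (exponent $s/2$), then Minkowski with $\sum_jv_j^{2\beta}\le1$ in $v$, gives $\sup_{u,v}\sum_iE_iL_i^2=\max_j\norm{(a_{ij}\sqrt{E_i})_i}_s^2$. Writing $E_i=(g_i^2+h_i^2)/2$ shows this is $\less H_s^2$. This computation is exactly where the common-coordinate Gaussian in $H_s$ comes from.

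\paragraph{The pure $v$-variation.} After squaring you get $N(w)=\max_i\sum_ja_{ij}^2|w_j|$, and your step ``the $L_t$ increment metric dominates $tN$'' is false.
\begin{itemize}
\item Conditional Jensen isolates a single coefficient and yields only $t\norm h_\infty$.
\item In fact $\norm{\sum_jh_jZ_j}_{L_t}\asymp t\norm h_\infty+\sqrt t\norm h_2$ for symmetric exponentials (Gluskin--Kwapie\'n). This is far below $t\norm h_1$ for spread-out $h$.
\item No union over $i$ repairs this: Lemma~\ref{lem:positive} handles only $\ell_\infty$-type metrics.
\end{itemize}
The paper does not square this piece. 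It uses $u_i'^{\alpha-1}\le1$ and H\"older along the segment, with $\tfrac12=\tfrac1r+\tfrac{\beta-1}2$, to bound it by $\beta N_r(v-v')$. It then chains $\Bplus n$ in the seminorm $N_r$ by uniform convexity plus dual Sudakov, $\ga(\Bplus n,N_r)\less\E N_r(g)\le H_r$, as in \eqref{eq:ball-chain}. That is a different tool from the exponential lower bound you invoke.

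\paragraph{Minor points.}
\begin{itemize}
\item $(p/2)^*=r/2$ is not a conjugate pair, since $2/p+2/r=2$. Because $p/2\le1$, linear functionals on $B_{p/2,+}$ are maximized at the vertices.
\item Your $\rho_{U,1}$ and $\rho_{U,2}$ are not symmetric in $(z,z')$, so as written they are not pseudometrics.
\end{itemize}
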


Define the coupled lengths
\begin{align}
 L_i(u,v)&=u_i^{\alpha-1}
       \left(\sum_j a_{ij}^2v_j^{2\beta}\right)^{1/2},\label{eq:L}\\
 C_j(u,v)&=v_j^{\beta-1}
       \left(\sum_i a_{ij}^2u_i^{2\alpha}\right)^{1/2}.\label{eq:C}
\end{align}
The notation $C_j$ here denotes a length, not an unspecified constant.
We also use the fixed seminorms
\begin{equation}\label{eq:Ns}
 N_r(h)=\max_i\norm{(a_{ij}h_j)_j}_r,
 \qquad N_s(k)=\max_j\norm{(a_{ij}k_i)_i}_s.
\end{equation}

\begin{lemma}[Profile decomposition]\label{lem:polar}
For $z=(u,v),z'=(u',v')\in K$,
\begin{align}\label{eq:profile-domination}
 \rho(z,z')\le{}&\alpha\norm{L(z)-L(z')}_\infty
       +\beta\norm{C(z)-C(z')}_\infty\notag\\
 &+2\alpha\beta\big(N_r(v-v')+N_s(u-u')\big).
\end{align}
\end{lemma}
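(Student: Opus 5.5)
The plan is to prove \eqref{eq:profile-domination} one row and one column at a time. It splits each profile difference into a change of ``length'', which is captured by $L$ or $C$, and a change of the underlying vector, which is captured by $N_r$ or $N_s$. By symmetry (swap $u\leftrightarrow v$, $\alpha\leftrightarrow\beta$, rows $\leftrightarrow$ columns, $r\leftrightarrow s$) it suffices to show that for each fixed $i$
\[
 \norm{(U_{ij}(z)-U_{ij}(z'))_j}_2\le\alpha|L_i(z)-L_i(z')|+\alpha\beta\,c\,N_r(v-v'),
\]
with an absolute $c$. The target value is $c=2$; taking the maximum over $i$, and adding the analogous column bound, then gives the claim.

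\textbf{Algebraic split.} Put $w_i(z)=(a_{ij}v_j^\beta)_j$. Then $U_{i\cdot}(z)=\alpha u_i^{\alpha-1}w_i(z)$ and $L_i(z)=u_i^{\alpha-1}\norm{w_i(z)}_2$. I will write
\[
 U_{i\cdot}(z)-U_{i\cdot}(z')=\alpha u_i^{\alpha-1}\big(w_i(z)-w_i(z')\big)+\alpha\big(u_i^{\alpha-1}-u_i'^{\alpha-1}\big)w_i(z').
\]
The norm of the second term equals $\alpha|u_i^{\alpha-1}-u_i'^{\alpha-1}|\,\norm{w_i(z')}_2$. By the identity
\[
 L_i(z)-L_i(z')=(u_i^{\alpha-1}-u_i'^{\alpha-1})\norm{w_i(z')}_2+u_i^{\alpha-1}\big(\norm{w_i(z)}_2-\norm{w_i(z')}_2\big),
\]
this norm is at most $\alpha|L_i(z)-L_i(z')|+\alpha u_i^{\alpha-1}\norm{w_i(z)-w_i(z')}_2$. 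Since $u_i\le1$ and $\alpha\ge1$, we have $u_i^{\alpha-1}\le1$; this holds also for $\alpha=1$ under the convention $t^0=1$. Hence everything reduces to bounding $\norm{w_i(z)-w_i(z')}_2$, which enters twice. To avoid paying twice, I would first order the pair so that $u_i\le u_i'$ or the reverse, and expand around the point with the smaller $u$-power. This may save a factor.

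\textbf{Key estimate.} The step I expect to be the main obstacle is
\[
 \norm{(a_{ij}(v_j^\beta-v_j'^\beta))_j}_2\lesssim\beta\,N_r(v-v').
\]
I would use the mean value bound $|v_j^\beta-v_j'^\beta|\le\beta|v_j-v_j'|\,m_j^{\beta-1}$ with $m_j=\max(v_j,v_j')$. Then I would apply H\"older with exponents $r/2$ and $r/(r-2)$ to $\sum_ja_{ij}^2|v_j-v_j'|^2m_j^{2\beta-2}$. Since $2\beta-2=2-4/r$, we get $(2\beta-2)\cdot r/(r-2)=2$, and so
\[
 \sum_ja_{ij}^2|v_j-v_j'|^2m_j^{2\beta-2}\le\norm{(a_{ij}(v_j-v_j'))_j}_r^2\Big(\sum_jm_j^2\Big)^{1-2/r}.
\]
The endpoints $r=2$ (where $\beta=1$ and the weight is trivial) and $r=\infty$ follow the same way.

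\textbf{Constant and assembly.} The crude bound $\sum_jm_j^2\le\norm v_2^2+\norm{v'}_2^2\le2$ yields only an absolute constant, namely $2^{1/2-1/r}$, rather than exactly $2$ in \eqref{eq:profile-domination}. To reach the stated constant I would instead split $m_j^{\beta-1}\le v_j^{\beta-1}+v_j'^{\beta-1}$, which is valid since $\beta-1\in[0,1]$. Then I would apply H\"older separately with $\norm v_2,\norm{v'}_2\le1$ and Minkowski. Combined with the ordering trick above, this should give the factor $2\alpha\beta$. If the exact constant resists, an absolute constant suffices for all later uses, since $\rho$ enters only through $\ga(K,\rho)$ up to constants. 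Summing the row bound with the transposed column bound proves the lemma.
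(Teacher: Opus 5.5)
Your algebraic split is the paper's inequality \eqref{eq:polar}, with $u_i^{\alpha-1}$ in place of $\min(a,b)$; both are at most one. It already gives
\[
 \norm{(U_{ij}(z)-U_{ij}(z'))_j}_2\le\alpha|L_i(z)-L_i(z')|+2\alpha\norm{w_i(z)-w_i(z')}_2 .
\]
The factor $2$ in \eqref{eq:profile-domination} is exactly this double payment, so nothing has to be saved there. Your ordering trick could not save it anyway: expanding around the smaller power only replaces $u_i^{\alpha-1}$ by $\min(u_i^{\alpha-1},u_i'^{\alpha-1})$, and the coefficient $2$ stays.

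What the stated constant actually needs is the key estimate with constant one, $\norm{w_i(z)-w_i(z')}_2\le\beta N_r(v-v')$, and this is where your argument has a gap. With $m_j=\max(v_j,v_j')$ you only get the factor $(\sum_jm_j^2)^{1/2-1/r}\le 2^{1/2-1/r}$. The bound $\sum_jm_j^2\le2$ cannot be improved ($v=e_1$, $v'=e_2$). Your proposed repair $m_j^{\beta-1}\le v_j^{\beta-1}+v_j'^{\beta-1}$ makes things worse: it gives $2\beta N_r(v-v')$, hence $4\alpha\beta$ overall. As written, the proposal proves the lemma only with $2\alpha\beta$ replaced by $2^{3/2}\alpha\beta$. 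That suffices for Proposition~\ref{prop:profile}, but it is not the inequality as stated.

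The missing idea is to bound the slope $|v_j^\beta-v_j'^\beta|/|v_j-v_j'|$ by a power of the coordinates of a \emph{single} point of $\Bplus n$, not a coordinatewise maximum. The paper integrates along the segment $v_t=(1-t)v+tv'$, which lies in $\Bplus n$ by convexity. For each fixed $t$, H\"older with $\frac12=\frac1r+\frac{\beta-1}2$ and $\norm{v_t}_2\le1$ gives $\norm{(a_{ij}h_jv_{t,j}^{\beta-1})_j}_2\le\norm{(a_{ij}h_j)_j}_r$. Minkowski's integral inequality then yields \eqref{eq:power-difference}.

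The same fix works inside your mean-value framework. Since $\beta-1\in[0,1]$, the map $x\mapsto x^{\beta-1}$ is concave, and Jensen gives
\[
 |v_j^\beta-v_j'^\beta|=\beta|v_j-v_j'|\int_0^1v_{t,j}^{\beta-1}\,dt\le\beta|v_j-v_j'|\Big(\frac{v_j+v_j'}2\Big)^{\beta-1}.
\]
The midpoint $(v+v')/2$ lies in $\Bplus n$, so your H\"older step goes through with factor one. Coordinatewise intermediate points $\xi_j$ from the mean value theorem do not suffice, because they need not form a point of the ball. With this change your split gives exactly $2\alpha\beta$.
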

\begin{proof}
For vectors $x,y$ in any normed space and $a,b\ge0$,
\begin{equation}\label{eq:polar}
 \norm{ax-by}\le
   |a\norm x-b\norm y|+2\min(a,b)\norm{x-y}.
\end{equation}
Indeed, if $a\ge b$, split $ax-by=(a-b)x+b(x-y)$ and use the reverse
triangle inequality to bound $(a-b)\norm x$ by
$|a\norm x-b\norm y|+b\norm{x-y}$. The other case is symmetric.

For the straight segment $v_t=(1-t)v+tv'$ in $\Bplus n$, H\"older gives
\[
 \norm{(a_{ij}h_jv_{t,j}^{\beta-1})_j}_2
 \le\norm{(a_{ij}h_j)_j}_r,
 \qquad \frac12=\frac1r+\frac{\beta-1}{2}.
\]
This includes $r=2$ and $r=\infty$ with the stated conventions. Integrating
the derivative along the segment gives
\begin{equation}\label{eq:power-difference}
 \max_i\norm{(a_{ij}(v_j^\beta-v_j'^\beta))_j}_2
 \le\beta N_r(v-v').
\end{equation}
Apply \eqref{eq:polar} to each row, with $a=u_i^{\alpha-1}$,
$b=u_i'^{\alpha-1}$, $x=(a_{ij}v_j^\beta)_j$, and
$y=(a_{ij}v_j'^\beta)_j$. Since $a,b\le1$, the row part of $\rho$ is
bounded by $\alpha\norm{L-L'}_\infty+2\alpha\beta N_r(v-v')$.
The column argument gives the remaining terms.
\end{proof}

\begin{lemma}[Chaining the lengths]\label{lem:lengths}
One has
\begin{equation}\label{eq:length-chains}
 \ga(L(K),\norm{\cdot}_\infty)\less H_s,
 \qquad
 \ga(C(K),\norm{\cdot}_\infty)\less H_r.
\end{equation}
\end{lemma}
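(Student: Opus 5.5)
The plan is to derive both bounds from Lemma~\ref{lem:positive}. Then I would remove the supremum over $K$ coordinate by coordinate, using H\"older's inequality in the same way as in the proof of Lemma~\ref{lem:trace}. I treat only the row lengths $L$; the bound for $C(K)$ then follows by the transposed argument, with $(s,\alpha,u)$ replaced by $(r,\beta,v)$ and $H_s$ by $H_r$.

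\emph{Step 1: apply Lemma~\ref{lem:positive}.} The set $L(K)\subset\R_+^m$ is compact, because $K$ is compact and $L$ is continuous. This holds since $\alpha-1\in[0,1]$ under the convention $t^0=1$. Lemma~\ref{lem:positive} then gives
\[
 \ga(L(K),\norm{\cdot}_\infty)^2\less
 \E\sup_{(u,v)\in K}\sum_i E_i\,u_i^{2\alpha-2}\sum_j a_{ij}^2v_j^{2\beta}.
\]

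\emph{Step 2: replace the exponentials by squared Gaussians.} Write $E_i\overset{d}{=}(g_i^2+\tilde g_i^2)/2$ with independent Gaussian vectors $g$ and $\tilde g$. The supremum of a sum is at most the sum of the suprema, and the two copies have the same law. So the right side is at most
\[
 \E\sup_{(u,v)\in K}\sum_j v_j^{2\beta}\,w_j(u),
 \qquad
 w_j(u)=\sum_i a_{ij}^2u_i^{2\alpha-2}g_i^2 .
\]

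\emph{Step 3: take the suprema.} Since $2\beta\ge2$ and $v\in\Bplus n$, we have $\sum_jv_j^{2\beta}\le1$. Hence the supremum over $v$ is at most $\max_j\sup_u w_j(u)$. Now $2\alpha-2=2-4/s$. For fixed $j$, apply H\"older with exponents $s/2$ and $s/(s-2)$; this works because $(2-4/s)\cdot s/(s-2)=2$ and $\sum_iu_i^2\le1$. It gives
\[
 w_j(u)\le\norm{(a_{ij}g_i)_i}_s^2 .
\]
The case $s=2$ is trivial by the zero-power convention, and $s=\infty$ is the maximum bound, exactly as in Lemma~\ref{lem:trace}. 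Thus the expectation is at most
\[
 \E\max_j\norm{(a_{ij}g_i)_i}_s^2=H_s^2 .
\]
Notice that the same Gaussian vector is shared by all columns, which matches the definition \eqref{eq:H}. Taking square roots proves the first bound in \eqref{eq:length-chains}.

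\emph{Main obstacle.} The only delicate point is Step~1. There one must check that Lemma~\ref{lem:positive} is applied to the set of lengths themselves, whose squares are linear in the variables $E_i$, rather than to the profiles. One must also check the endpoint conventions ($s=2$, $s=\infty$, and zero coordinates) in the continuity of $L$ and in the H\"older step. Once that is in place, the remaining steps are the elementary suprema above, and no independence between the different column norms is needed.
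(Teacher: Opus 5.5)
Your proof is correct and follows the paper's route: you apply Lemma~\ref{lem:positive} to $L(K)$, bound the resulting supremum by $\max_j\norm{(a_{ij}\sqrt{E_i})_i}_s^2$ using H\"older in $u$ and $\sum_j v_j^{2\beta}\le1$ in $v$, and realize $E_i$ as $(g_i^2+\tilde g_i^2)/2$. The only cosmetic difference is that you split the exponentials linearly before taking suprema, which gives exactly $H_s^2$; the paper instead computes the supremum exactly and then uses a pointwise triangle inequality, losing an absolute factor.
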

\begin{proof}
For fixed nonnegative $E_i$ and $v$, optimize over $u$ to obtain
\begin{align}
 \sup_{u\in\Bplus m}\sum_iE_iL_i(u,v)^2
 &=\sup_{u\in\Bplus m}
    \sum_i u_i^{2-4/s}E_i\sum_j a_{ij}^2v_j^{2\beta}\notag\\
 &=\norm{\left(E_i\sum_j a_{ij}^2v_j^{2\beta}\right)_i}_{s/2}.
 \label{eq:opt-u}
\end{align}
At $s=2$ the last norm is $\ell_1$, and at $s=\infty$ it is $\ell_\infty$.
Minkowski's inequality and $\sum_jv_j^{2\beta}\le1$ show that the supremum
over $v$ equals
\begin{equation}\label{eq:coupled-sup}
 \sup_{u,v}\sum_iE_iL_i(u,v)^2
 =\max_j\norm{(E_i a_{ij}^2)_i}_{s/2}
 =\max_j\norm{(a_{ij}\sqrt{E_i})_i}_s^2.
\end{equation}
Equality is achieved by taking $v$ to be a maximizing coordinate vector.

Take independent standard Gaussian vectors $g,h$. The independent variables
$(g_i^2+h_i^2)/2$ have the exponential distribution of mean one. Pointwise,
\[
 \max_j\norm{(a_{ij}\sqrt{(g_i^2+h_i^2)/2})_i}_s
 \le\frac1{\sqrt2}\left(
   \max_j\norm{(a_{ij}|g_i|)_i}_s+
   \max_j\norm{(a_{ij}|h_i|)_i}_s\right).
\]
The $L_2$ norm of the right side is at most $\sqrt2 H_s$.
Lemma~\ref{lem:positive} applied to \eqref{eq:coupled-sup} proves the
first assertion. The transposed calculation proves the second.
\end{proof}

\begin{proof}[Proof of Proposition~\ref{prop:profile}]
In addition to the two chains from Lemma~\ref{lem:lengths},
\eqref{eq:ball-chain} gives
\[
 \ga(\Bplus n,N_r)\less\E N_r(g)\le H_r,
 \qquad
 \ga(\Bplus m,N_s)\less\E N_s(g)\le H_s.
\]
Choose nested admissible partitions for these four sets, within a fixed
factor of their infima. Pull them back to $K$ by $L,C,v,u$, respectively.
At level $k\ge2$, intersect their cells from level $k-2$. The intersection
partition has at most
\[
 \big(2^{2^{k-2}}\big)^4=2^{2^k}
\]
cells, and the sequence is nested. Use $\{K\}$ at the initial levels.
The inequality $(x_1+\cdots+x_4)^2\le4\sum_{a=1}^4x_a^2$, together with
\eqref{eq:profile-domination}, bounds the branchwise sum of squared
$\rho$-diameters by $C(H_r^2+H_s^2)$. The initial diameters are bounded by
the same four chain budgets. Lemma~\ref{lem:moments} completes the proof.
\end{proof}

\begin{remark}\label{rem:coupling}
The label for $L$ refines the entire vector
$u_i^{\alpha-1}(\sum_j a_{ij}^2v_j^{2\beta})^{1/2}$. It does not refine
$u^{\alpha-1}$ separately from $v$. Thus the dependence that matters when
$r$ and $s$ are very different is retained in every cell. All four labels are
pulled back to the original set $K$ and then intersected, so no independent
choice of incompatible row and column bodies is introduced.
\end{remark}

\section{Transferring profile refinements to the canonical metric}
\label{sec:transfer}
\begin{proposition}[Transfer estimate]\label{prop:transfer}
For the map and metrics in Section~\ref{sec:geometry},
\begin{equation}\label{eq:transfer}
 \gamma_2(K,d)\less R+P,
 \qquad P=\ga(K,\rho),\quad
 R=\sup_{z\in K}\norm{T_z}_\HS.
\end{equation}
\end{proposition}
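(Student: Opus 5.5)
We plan to prove the transfer estimate \eqref{eq:transfer} with the contraction principle \eqref{eq:contraction}, applied to $(K,d)$. The input is a linearization of $F$ on each cell of the $\rho$-hierarchy. Fix an admissible sequence $(\mathcal A_k)$ for $(K,\rho)$ that is within a factor two of $P$. For $x\in K$ put $\delta_k(x)=\diam_\rho(\mathcal A_k(x))$, so that $\sup_x\sum_k2^k\delta_k(x)^2\less P^2$.

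\textbf{Linearization on a set.} Take $B\subset K$ and a point $z_0\in B$, and let $\delta=\diam_\rho B$. Write $\Delta=z-z_0$. Integrating along the segment, which lies in $K$ by convexity,
\[
 F(z)-F(z_0)=T_{z_0}\Delta+\int_0^1\big(T_{z_0+t\Delta}-T_{z_0}\big)\Delta\,dt .
\]
One point needs care: $\rho(z_0+t\Delta,z_0)$ is not obviously controlled by $\rho$-diameters of $B$, since the segment leaves $B$. We will instead bound the remainder term with Lemma~\ref{lem:polar} and the monotone structure of the powers. Alternatively, we can avoid this by noting that $F$ is coordinatewise a product of powers, so the remainder can be written as $(T_z-T_{z_0})\Delta$ plus a term with the same sign pattern. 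The first thing to try is the exact identity
\[
 F(z)-F(z_0)=\tfrac12\big(T_z+T_{z_0}\big)\Delta+E(z,z_0),
\]
with the error $E$ bounded entrywise by $|U(z)-U(z_0)|\,|h|+|V(z)-V(z_0)|\,|k|$. This bound holds for convex or concave powers $\alpha,\beta\in[1,2]$. Since $\|\Delta\|_2\le 2\sqrt2$ on $K$, the operator estimate \eqref{eq:derivative-op} then gives
\[
 \|F(z)-F(z_0)-T_{z_0}\Delta\|_2\less\rho(z,z_0)\le\delta .
\]

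\textbf{Entropy estimate.} By the linearization, the $d$-entropy of $B$ at level $k$ is at most $C\delta$ plus the Euclidean entropy of $T_{z_0}(B-z_0)\subset T_{z_0}(2\sqrt2B_2)$. Hence
\[
 e_k(B,d)\le C\diam_\rho B+C\,e_k\big(T_{z_0}B_2,\|\cdot\|_2\big).
\]
To put this in the form \eqref{eq:contraction-hyp}, we work with sets $B$ drawn from the $\rho$-partition. For $B$ inside a cell of $\mathcal A_k(x)$ we set
\[
 s_k(x)=C\delta_{k}(x)+C\,e_{k}(T_{x}B_2).
\]
The $\theta\diam_dB$ slack is not needed. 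We would run the contraction argument through the nested partition, as in its proof in \cite{vH}, by applying the hypothesis only to the cells $\mathcal A_k(x)$, choosing the centers $z_0=x$, and refining cells within them.

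\textbf{Summing.} Cauchy--Schwarz gives
\[
 \sum_k2^{k/2}\delta_k\le\Big(\sum_k2^k\delta_k^2\Big)^{1/2}\Big(\sum_k 1\Big)^{1/2},
\]
which diverges. This is the main obstacle. The plan is to use the $\gamma_{2,2}$ structure rather than $\gamma_{2,1}$, by changing the linearization order. The remainder is quadratic, of order $\rho\cdot\|\Delta\|_2$, and $\|\Delta\|_2$ shrinks along the chain. So we use the error bound $\delta_k(x)\,\diam_2(\mathcal A_k(x))$ together with a joint hierarchy in which the Euclidean diameters of cells are controlled by the ellipsoid entropy. The Euclidean part of that hierarchy is the covering of $T_x B_2$, whose contribution is $\sum_k2^{k/2}e_k(T_xB_2)$. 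By \eqref{eq:ellipsoid} and Cauchy--Schwarz, this square-summability yields $\sum_k 2^ke_k^2\less R^2$. The resulting bound is
\[
 \sum_k2^{k/2}\delta_k\epsilon_k\le\Big(\sum_k2^k\delta_k^2\Big)^{1/2}\Big(\sum_k\epsilon_k^2\Big)^{1/2}\less P,
\]
and the $e_k$ terms are handled by the Dudley-to-ellipsoid estimate, giving a total of order $R$. Making this product bound rigorous, with nested compatible partitions in which both $\rho$ and $\|\cdot\|_2$ shrink, is the step I expect to be hardest.
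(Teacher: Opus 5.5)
\textbf{There is a genuine gap, at exactly the step you flag as hardest: the proposed repair of the summation cannot work.}

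You correctly note that only square budgets are available, $\sup_x\sum_k2^k\delta_k(x)^2\less P^2$ and $\sup_z\sum_k2^ke_k(T_zB_2^{m+n})^2\less R^2$, while \eqref{eq:contraction} charges $\sum_k2^{k/2}s_k$. This mismatch affects the tangent term as much as the profile term.
\begin{itemize}
\item \emph{Tangent term.} Cauchy--Schwarz does not turn $\sum_k2^ke_k^2\less R^2$ into $\sum_k2^{k/2}e_k\less R$. In general, Dudley's sum for an ellipsoid can exceed $\norm{T}_{\HS}$ by a factor of order $\sqrt{\log(m+n)}$ (take semi-axes $i^{-1/2}$). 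So your ``Dudley-to-ellipsoid'' step is not available.
\item \emph{Profile term.} No nested hierarchy achieves $\sup_x\sum_k\epsilon_k(x)^2\less1$ for the Euclidean cell diameters $\epsilon_k$ with dimension-free constants. Indeed, $K$ contains $2^{cm}$ points at mutual Euclidean distance at least $1/2$, for instance normalized indicators of a Gilbert--Varshamov family of half-size subsets. Whenever $2^k<cm$, some level-$k$ cell therefore has Euclidean diameter at least $1/2$, and so do all its ancestors. This forces $\sup_x\sum_k\epsilon_k(x)^2\gtrsim\log m$.
\end{itemize}
No fixed Euclidean hierarchy localizes cheaply enough.

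\textbf{The missing idea} is to localize in the Euclidean norm \emph{relative to the $d$-diameter of the set being covered}. This is paid for with the slack $\theta\diam_d B$ in \eqref{eq:contraction-hyp}, which you set aside. The paper's route is as follows.
\begin{enumerate}
\item It keeps the quadratic remainder $2(\rho(x,b)+\rho(y,b))\norm{x-y}_2$ of Lemma~\ref{lem:remainder}. This is proved along the geometric path $x^ty^{1-t}$, on which every entry of $U$ and $V$ is a weighted geometric mean of its endpoint values. That path, not the straight segment, is what justifies your entrywise remainder bound.
\item It replaces $z$ by a minimizer $\pi_tz$ of $\norm{w}_2^2+t\,d(z,w)+t^2\rho(z,w)^2$, with $t=t_k=a2^{k/2}$. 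The residuals $\varepsilon_k=d(z,\pi_{t_k}z)$ and $\eta_k=\rho(z,\pi_{t_k}z)$ satisfy $\sum_k(t_k\varepsilon_k+t_k^2\eta_k^2)\less1$ by telescoping.
\item On each piece $B\cap Q$ with $Q\in\mathcal Q_{k-2}$, the midpoint identity \eqref{eq:energy-mid} bounds the Euclidean diameter $h$ of the projected piece by $h^2\less t(D+\varepsilon)+t^2(\delta^2+\eta^2)$. Here $D=\diam_d B$, $\delta=\diam_\rho Q$, and $\varepsilon,\eta$ are the residual suprema on the piece.
\item Young's inequality then turns $h(\lambda_{k-2}+\delta+\eta)$, where $\lambda_k(z)=e_k(T_zB_2^{m+n},\norm{\cdot}_2)$, into $\theta D+C_\theta\{\varepsilon+t(\lambda_{k-2}^2+\delta^2+\eta^2)\}$.
\item The resulting costs $\varepsilon_k+t_k(\lambda_{k-2}^2+\delta_{k-2}^2+\eta_k^2)$ have weighted sum $\less R+a^{-1}+a(R^2+P^2)$, using exactly your square budgets. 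Choosing $a=(R^2+P^2)^{-1/2}$ gives $R+P$.
\end{enumerate}

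The same device also removes your restriction to cells $\mathcal A_k(x)$ centered at $x$. The contraction hypothesis must hold for arbitrary $B\subset K$. The paper obtains this by splitting $B$ along $\mathcal Q_{k-2}$ and anchoring each tangent cover at a near-minimizer of $\lambda_{k-2}$ on the piece.
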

We give the localization and entropy argument in detail. In particular, the
local tangent map may change from cell to cell, while its entropy is charged
to a fixed original point when the levels are summed.

\subsection{Geometric midpoints and endpoint remainders}
Write
$\mathcal E(u,v)=\norm u_2^2+\norm v_2^2\le2$.
For $x,y\in K$, let $m(x,y)$ be their coordinatewise geometric mean. Then
$m(x,y)\in K$ and
\begin{equation}\label{eq:energy-mid}
 \mathcal E(m(x,y))
 =\frac{\mathcal E(x)+\mathcal E(y)-\norm{x-y}_2^2}{2}.
\end{equation}
Each entry of $F(m)$, $U(m)$, and $V(m)$ is the geometric mean of the
corresponding endpoint entries. Hence it lies between them, and
\begin{equation}\label{eq:mid-contract}
 d(x,m(x,y))\le d(x,y),\qquad
 \rho(x,m(x,y))\le\rho(x,y).
\end{equation}

\begin{lemma}[Endpoint-controlled linearization]\label{lem:remainder}
For every $x,y,b\in K$,
\begin{equation}\label{eq:remainder}
 \norm{F(x)-F(y)-T_b(x-y)}_2
 \le2\big(\rho(x,b)+\rho(y,b)\big)\norm{x-y}_2.
\end{equation}
\end{lemma}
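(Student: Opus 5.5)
The plan is to integrate the derivative along the straight segment and reduce the claim to an entrywise sandwich for the derivative profiles. Put $z_t=(1-t)y+tx$ for $t\in[0,1]$. Since $K$ is the product of the convex sets $\Bplus m$ and $\Bplus n$, each $z_t$ lies in $K$. On the segment, $F$ is $C^1$ in $t$: the exponents satisfy $\alpha,\beta\ge1$, and $t^0=1$ is the convention for a linear coordinate. Hence
\[
 F(x)-F(y)-T_b(x-y)=\int_0^1\big(T_{z_t}-T_b\big)(x-y)\,dt .
\]
By \eqref{eq:derivative-op},
\[
 \norm{F(x)-F(y)-T_b(x-y)}_2\le\sup_{t}\rho(z_t,b)\,\norm{x-y}_2 .
\]
It therefore suffices to prove
\[
 \rho(z_t,b)\le2\big(\rho(x,b)+\rho(y,b)\big)\qquad(0\le t\le1).
\]

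The mechanism for the factor $2$ is an entrywise sandwich. Suppose that for each $(i,j)$ the number $U_{ij}(z_t)$ lies between $U_{ij}(p)$ and $U_{ij}(q)$ for two points $p,q$. Then
\[
 |U_{ij}(z_t)-U_{ij}(b)|\le|U_{ij}(p)-U_{ij}(b)|+|U_{ij}(q)-U_{ij}(b)| .
\]
Taking row $\ell_2$ norms and the maximum over $i$ bounds the row part of $\rho(z_t,b)$ by the sum of the row parts for $p$ and $q$. The column part is handled in the same way. This is the argument that gives \eqref{eq:mid-contract} for geometric midpoints, and it is the reason the constant is $2$.

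The first attempt at the sandwich is to observe that $U_{ij}(z_t)=\alpha a_{ij}\,u_{t,i}^{\alpha-1}v_{t,j}^{\beta}$ is a product of a monotone concave factor and a monotone convex factor of $t$. If $u_i$ and $v_j$ move in the same direction, the product is monotone and lies between the endpoint values $U_{ij}(y)$ and $U_{ij}(x)$, which is exactly what is needed. The analogous statement holds for $V_{ij}$.

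The main obstacle is the opposite-direction case, where $u_i$ increases while $v_j$ decreases, or vice versa. There the product need not lie between its endpoint values; it is only bounded by the mixed corner values $\alpha a_{ij}u_i^{\alpha-1}v_j'^{\beta}$ and $\alpha a_{ij}u_i'^{\alpha-1}v_j^{\beta}$, which are not profiles of $x$, $y$ or $b$.

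I would first try to work one entry at a time, using the two-variable scalar function $f(\sigma,\tau)=a_{ij}\sigma^\alpha\tau^\beta$. For each entry I would choose the intermediate point of the mean value theorem along a monotone path that moves only one coordinate at a time, taken in whichever order keeps each partial derivative between its endpoint values. On each leg only one coordinate varies, so the relevant partial derivative is monotone in the moving variable. If the order can always be chosen so that the partial derivative lies between the $x$- and $y$-values, the entrywise sandwich holds with $p=x$ and $q=y$, and \eqref{eq:remainder} follows.

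If this fails for the cross terms, my fallback is to split
\[
 x-y=(u-u',0)+(0,v-v')
\]
and to compare the mixed profile with the endpoints using a further entrywise triangle inequality. This may cost more than the constant $2$. Checking that the constant $2$ survives in the cross case is the step I expect to need the most care.
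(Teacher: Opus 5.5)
Your integral identity along the straight segment and the bound by $\sup_t\rho(z_t,b)\,\norm{x-y}_2$ are correct. The reduction you make next is false, and not only delicate in the cross case: on the straight segment, $\rho(z_t,b)$ is not bounded by any multiple of $\rho(x,b)+\rho(y,b)$.

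\textbf{Counterexample to the reduction.} Take $m=n=1$, $a_{11}=1$, $p=1$, $q=\infty$. Then $\alpha=\beta=2$, $U=2uv^2$, $V=2u^2v$. Let $x=(1,0)$, $y=(0,1)$, $b=x$.
\begin{enumerate}
\item All four endpoint profile values vanish, so $\rho(x,b)=\rho(y,b)=0$.
\item At $z_{1/2}=(\tfrac12,\tfrac12)$ one has $U=V=\tfrac14$, so $\rho(z_{1/2},b)=\tfrac12$.
\item The positive points $x=(1,\epsilon)$, $y=(\epsilon,1)$ give $\rho(x,b)+\rho(y,b)=O(\epsilon)$ while $\rho(z_{1/2},b)\to\tfrac12$.
\end{enumerate}
The lemma itself holds trivially in this example; it is the supremum bound that throws away the cancellation.

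\textbf{The per-entry fallback fails too.} It cannot give the sandwich with $p=x$, $q=y$. Suppose $u_i>u_i'$ and $v_j<v_j'$. The two one-coordinate-at-a-time orders pass through the corner $(u_i,v_j')$ or the corner $(u_i',v_j)$. Since $U_{ij}$ and $V_{ij}$ are monomials in $(u_i,v_j)$ with nonnegative exponents, their values at the first corner are at least both endpoint values, and at the second corner at most both. So in either order the partial derivative on each leg leaves the interval spanned by the $x$- and $y$-values. The global split $x-y=(u-u',0)+(0,v-v')$ has the same defect on every opposite-direction entry.

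\textbf{The missing idea is to change the path, not the order of the legs.} For positive endpoints, use the coordinatewise geometric path $w(t)=x^ty^{1-t}=(\mu(t),\nu(t))$. This is the continuous version of the geometric-midpoint mechanism you already cited. Along it,
$U_{ij}(w(t))=\alpha a_{ij}\big(u_i^{\alpha-1}v_j^{\beta}\big)^t\big(u_i'^{\alpha-1}v_j'^{\beta}\big)^{1-t}$,
and similarly for $V_{ij}$. So every profile entry interpolates geometrically between its endpoint values, whatever the directions of $u_i$ and $v_j$. Every coordinate of $w$ is also monotone.

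\textbf{Work entrywise, not through the operator norm.} Since $\dot w\neq x-y$, do not pass through $\norm{T_w-T_b}_{\op}$. That route charges the Euclidean length of $w$, which can exceed $\norm{x-y}_2$ by a dimension-dependent factor. Instead:
\begin{enumerate}
\item Entry $(i,j)$ of the remainder equals $\int_0^1(U_{ij}(w)-U_{ij}(b))\dot\mu_i\,dt+\int_0^1(V_{ij}(w)-V_{ij}(b))\dot\nu_j\,dt$.
\item Monotonicity gives $\int_0^1|\dot\mu_i|\,dt=|u_i-u_i'|$, and likewise for $\nu_j$.
\item Hence the entry is at most $c_{ij}|u_i-u_i'|+e_{ij}|v_j-v_j'|$, where $c_{ij}=|U_{ij}(x)-U_{ij}(b)|+|U_{ij}(y)-U_{ij}(b)|$ and $e_{ij}$ is the analogous $V$-quantity.
\item The estimate $\sum_{i,j}c_{ij}^2|u_i-u_i'|^2\le\norm{u-u'}_2^2\max_i\sum_jc_{ij}^2$ produces the row parts of $\rho(x,b)+\rho(y,b)$. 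The columns are symmetric.
\item Since $\norm{u-u'}_2,\norm{v-v'}_2\le\norm{x-y}_2$, this gives the claim, in fact with constant $1$.
\end{enumerate}
Zero coordinates are handled by approximating with positive points, using continuity of $F$, $U$, $V$ on $K$ under the convention $t^0=1$.
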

\begin{proof}
First suppose all endpoint coordinates are positive. Join $y$ to $x$ by
the coordinatewise geometric path $w(t)=x^t y^{1-t}$, $0\le t\le1$.
H\"older's inequality shows that this path stays in $K$. Every coordinate of
the path is monotone, and every entry of either derivative profile stays
between its endpoint entries.

Write $x=(u,v)$ and $y=(u',v')$. After subtracting $T_b(x-y)$, the absolute
value of the contribution to coordinate $(i,j)$ from variation of $u_i$ is
at most
\[
 |u_i-u_i'|\big(|U_{ij}(x)-U_{ij}(b)|+
                         |U_{ij}(y)-U_{ij}(b)|\big).
\]
Its Frobenius norm is bounded by $\norm{u-u'}_2$ times the sum of the two
row-profile distances. The $v$ contribution is bounded by
$\norm{v-v'}_2$ times the corresponding sum of column-profile distances.
The triangle inequality gives \eqref{eq:remainder}. General endpoints follow
by approximation by positive points in $K$, since $F,U,V$ are continuous
under our zero-power convention.
\end{proof}

\subsection{A regularizer that pays both residual errors}
Fix $a>0$ and write $t_k=a2^{k/2}$, also allowing $k=-1$.
For $t>0$ and $z\in K$, define
\begin{equation}\label{eq:regularizer}
 J_t(z)=\min_{w\in K}
   \{\mathcal E(w)+t\,d(z,w)+t^2\rho(z,w)^2\}.
\end{equation}
A minimizer $\pi_tz$ exists by compactness. Choose one for each $t,z$, and set
\begin{equation}\label{eq:residuals}
 \varepsilon_k(z)=d(z,\pi_{t_k}z),\qquad
 \eta_k(z)=\rho(z,\pi_{t_k}z).
\end{equation}
The function $J_t(z)$ is nondecreasing in $t$ and lies in $[0,2]$.
Testing $J_{t_{k-1}}(z)$ at the minimizer for $t_k$ yields
\[
 J_{t_k}(z)-J_{t_{k-1}}(z)
 \ge(1-2^{-1/2})t_k\varepsilon_k(z)
       +\tfrac12t_k^2\eta_k(z)^2.
\]
Telescoping proves the uniform budget
\begin{equation}\label{eq:residual-budget}
 \sup_{z\in K}\sum_{k\ge0}
  \big(t_k\varepsilon_k(z)+t_k^2\eta_k(z)^2\big)\less1.
\end{equation}
Thus both the canonical approximation error and the change in derivative
profile have been charged before local covers are chosen.

\subsection{Localization inside one profile cell}
Choose nested admissible partitions $(\mathcal Q_k)$ of $K$ with
\begin{equation}\label{eq:Q-budget}
 \delta_k(z)=\diam_\rho\mathcal Q_k(z),\qquad
 \sup_z\sum_{k\ge0}2^k\delta_k(z)^2\less P^2.
\end{equation}
One may first use $(P+\epsilon)^2$ and let $\epsilon\downarrow0$.
For every $z\in K$, put
\begin{equation}\label{eq:lambda}
 \lambda_k(z)=e_k(T_zB_2^{m+n},\norm{\cdot}_2).
\end{equation}
By \eqref{eq:ellipsoid},
\begin{equation}\label{eq:lambda-budget}
 \sup_z\sum_{k\ge0}2^k\lambda_k(z)^2\less R^2.
\end{equation}

Fix $k\ge2$, a nonempty $B\subset K$, and one nonempty piece
$B_0=B\cap Q$ with $Q\in\mathcal Q_{k-2}$. Abbreviate
\begin{gather*}
 D=\diam_d B,\qquad \delta=\diam_\rho Q,\qquad t=t_k,\\
 \varepsilon=\sup_{z\in B_0}\varepsilon_k(z),\qquad
 \eta=\sup_{z\in B_0}\eta_k(z),\qquad A_t=\{\pi_tz:z\in B_0\}.
\end{gather*}
We claim that its Euclidean diameter $h=\diam_2 A_t$ satisfies
\begin{equation}\label{eq:local-radius}
 h^2\less t(D+\varepsilon)+t^2(\delta^2+\eta^2).
\end{equation}
Indeed, take $x=\pi_tz,y=\pi_tz'$ with $z,z'\in B_0$, and interchange them
if necessary so that $\mathcal E(x)\ge\mathcal E(y)$. Compare the minimum
in \eqref{eq:regularizer} at $x$ with the competitor $m=m(x,y)$.
By \eqref{eq:energy-mid},
\begin{align*}
 \tfrac12\norm{x-y}_2^2
 &\le\mathcal E(x)-\mathcal E(m)\\
 &\le t\big(d(z,m)-d(z,x)\big)
       +t^2\big(\rho(z,m)^2-\rho(z,x)^2\big).
\end{align*}
The first difference is at most $d(x,m)\le d(x,y)\le D+2\varepsilon$.
Also
\[
 \rho(z,m)\le\rho(z,x)+\rho(x,y)
 \le\delta+3\eta.
\]
Dropping the negative square proves \eqref{eq:local-radius}.

\subsection{Local tangent covers and fixed-point accounting}
Select $b\in B_0$ with $\lambda_{k-2}(b)$ arbitrarily close to
$\inf_{z\in B_0}\lambda_{k-2}(z)$. For every $x\in A_t$,
$\rho(x,b)\le\delta+\eta$. Fix $x_0\in A_t$.
The set $T_b(A_t-x_0)$ lies in $hT_bB_2^{m+n}$ and can be covered by at most
$2^{2^{k-2}}$ balls with radii arbitrarily close to
$h\lambda_{k-2}(b)$.

Choose one point of $A_t$ from each nonempty inverse cover cell. For two
points $x,y$ in the same cell, Lemma~\ref{lem:remainder} shows that
\[
 d(x,y)\le\norm{T_b(x-y)}_2+C(\delta+\eta)h.
\]
The chosen points therefore provide a cover of $F(A_t)$ at radius
$Ch(\lambda_{k-2}(b)+\delta+\eta)$. Each original point of $B_0$ is within
$\varepsilon$ of its image under $\pi_t$. Thus
\begin{equation}\label{eq:local-cover}
 e_{k-2}(B_0,d)
 \less\varepsilon+h(\lambda_{k-2}(b)+\delta+\eta).
\end{equation}
Infima in entropy radii or in the choice of $b$ can be approached and then
passed to a limit; no minimizing anchor is required.

Let $\ell=\inf_{z\in B_0}\lambda_{k-2}(z)$.
Insert \eqref{eq:local-radius} into \eqref{eq:local-cover} and use Young's
inequality. For any fixed $\theta>0$ this gives
\begin{equation}\label{eq:young}
 e_{k-2}(B_0,d)
 \le\theta D+C_\theta\big\{\varepsilon+
                         t(\ell^2+\delta^2+\eta^2)\big\}.
\end{equation}
For example, the term
$\sqrt{t(D+\varepsilon)}(\ell+\delta+\eta)$ is bounded by
$\theta(D+\varepsilon)+C_\theta t(\ell^2+\delta^2+\eta^2)$, and the
remaining product is bounded by the latter quadratic term.

There are at most $2^{2^{k-2}}$ profile pieces, each requiring at most that
many tangent-cover cells. Their total number is at most $2^{2^{k-1}}$, and
therefore at most $2^{2^k}$. Consequently \eqref{eq:young} implies
\begin{equation}\label{eq:entropy-contraction}
 e_k(B,d)\le\theta\diam_d B+C_\theta\sup_{z\in B}s_k(z),
\end{equation}
where, for $k\ge2$, we may take
\begin{equation}\label{eq:cost}
 s_k(z)=\varepsilon_k(z)+t_k\big(
     \lambda_{k-2}(z)^2+\delta_{k-2}(z)^2+\eta_k(z)^2\big).
\end{equation}
Here the constant accommodates the elementary inequality that a sum of a
fixed number of suprema is at most that number times the supremum of the
sum. Most importantly, $\ell\le\lambda_{k-2}(z)$ for \emph{every} original
point $z\in B_0$. Thus the local cover may use a different tangent anchor at
each level, but its charge is bounded by the entropy sequence of $T_z$ for
the same original $z$. There is no summation over changing anchors.

For $k=0,1$, take $s_k(z)=\diam_d K\le2a_*\le2R$.
Equations \eqref{eq:residual-budget}, \eqref{eq:Q-budget}, and
\eqref{eq:lambda-budget} give
\begin{align}
 \sup_z\sum_{k\ge0}2^{k/2}s_k(z)
 &\less R+a^{-1}+a(R^2+P^2).\label{eq:total-charge}
\end{align}
Indeed the $\varepsilon_k$ and $\eta_k$ contributions equal, up to the
factor $a^{-1}$, the two sums in \eqref{eq:residual-budget}. The other two
contributions use $2^{k/2}t_k=a2^k$ and a shift of two levels.

Take $\theta$ sufficiently small and apply the contraction principle
\eqref{eq:contraction}. This proves
\[
 \gamma_2(K,d)\less R+a^{-1}+a(R^2+P^2).
\]
For a nonzero matrix, choose $a=(R^2+P^2)^{-1/2}$; the zero matrix is
immediate. This proves Proposition~\ref{prop:transfer}.
The chaining functional is finite before applying contraction: $F$ is
Lipschitz on the compact finite-dimensional set $K$, so Euclidean covers
already give a finite, possibly dimension-dependent bound.

\begin{remark}[Why the hierarchy retains its refinements]
The recursive partition proof of the contraction principle
\cite[Section~3.3]{vH} refines the existing cells by the current cost labels
and then by the local cover. Past labels remain in the prefix. The profile
partitions in \eqref{eq:Q-budget} are nested from the outset. The
double-exponential admissibility allowance absorbs the product of the
previous numbers of cells after a fixed shift of levels. The three charges
in \eqref{eq:total-charge} have distinct, explicit sources: the two
regularization residuals telescope, the profile diameters use one fixed
hierarchy, and the tangent entropies use one fixed original point on each
branch. No extra unbounded support-label charge is left outside the estimate.
\end{remark}

\section{Completion of the Gaussian matrix bound}\label{sec:completion}
Let
\[
 X_z=\sum_{i,j}a_{ij}g_{ij}u_i^\alpha v_j^\beta,
 \qquad z=(u,v)\in K.
\]
This is a centered Gaussian process with canonical metric $d$, and its
index set contains a point with $X_z=0$. By the Gaussian upper bound,
Propositions~\ref{prop:profile} and \ref{prop:transfer}, and
Lemma~\ref{lem:trace},
\begin{equation}\label{eq:positive-width}
 \E\sup_{z\in K}X_z\less\gamma_2(K,d)
 \less R+P\less M.
\end{equation}
To pass to the full real operator norm, write any
$x\in B_{s^*}^m$, $y\in B_p^n$ as $x=x^+-x^-$ and $y=y^+-y^-$.
Each positive or negative part is still in the corresponding positive unit
ball. The bilinear form splits into four terms. If
$Z_+=\sup_{z\in K}X_z$ and $Z_-=\sup_{z\in K}(-X_z)$, then pointwise
\[
 \norm{G_A}_{p\to s}\le2Z_++2Z_-.
\]
Since $G_A$ and $-G_A$ have the same distribution, \eqref{eq:positive-width}
proves Theorem~\ref{thm:main}. All steps include $r=\infty$ or $s=\infty$;
the endpoint conventions were checked in the individual estimates.

\section{Prefix-dependent blocks and the corrected budget}\label{sec:blocks}
For completeness, we spell out the block consequence without identifying
the canonical image with a larger union of ellipsoids. Define the full
canonical coefficient set
\begin{equation}\label{eq:full-image}
 \mathcal T=
 \{(a_{ij}x_i y_j)_{i,j}:x\in B_{s^*}^m,\ y\in B_p^n\}
 \subset\R^{mn}.
\end{equation}
The four positive-part maps pull the chain on $F(K)$ back to the original
index set $B_{s^*}^m\times B_p^n$. Intersecting these four chains, with a
fixed level shift, and using the triangle inequality gives a nested
admissible chain for this index set in its canonical pseudometric with
budget at most $CM$. Equivalently, one may choose representative image
points, each from an original index, along these nested prefixes.

\begin{proposition}[Blocks indexed by prefixes]\label{prop:blocks}
There is a rooted, admissible hierarchy of original indices with a
rank-one block $A_v:\R\to\R^{mn}$ on each nonzero edge $v$ at level
$n(v)$, and nonnegative coefficients $c_v$, such that along the branch
$h(\theta)$ of every original index $\theta$,
\begin{align}
 \sum_{v\in h(\theta)}c_v&\le1,\label{eq:block-coeff}\\
 F_{\mathrm{full}}(\theta)&=
   \sum_{v\in h(\theta)}A_v\sqrt{c_v},\label{eq:block-image}\\
 \sup_h\sum_{v\in h}2^{n(v)}\norm{A_v}_\op^2&\less M^2.
 \label{eq:block-op}
\end{align}
Here $F_{\mathrm{full}}(x,y)=(a_{ij}x_i y_j)_{ij}$. The hierarchy can also
be chosen so that
\begin{equation}\label{eq:block-trace}
 \sup_h\sum_{v\in h}\norm{A_v}_\HS^2\less R^2.
\end{equation}
The supremum may include limit branches of the tree.
\end{proposition}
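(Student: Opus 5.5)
I will build the hierarchy from the chain on the full index set described just before the statement. That chain comes from pulling back, along the four positive-part maps, the localized chain produced in the proof of Proposition~\ref{prop:transfer}. The blocks will be the increments of the image along each branch.

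\emph{Construction.} Fix the nested admissible partitions $(\mathcal A_k)$ of $B_{s^*}^m\times B_p^n$ obtained by intersecting the four pulled-back chains with a fixed level shift. Along every prefix, choose a representative original index $\theta_k(\theta)$ in the cell $\mathcal A_k(\theta)$, with $\theta_0$ the zero index, so that $F_{\mathrm{full}}(\theta_0)=0$. The edge $v$ from level $k-1$ to level $k$ then carries the increment $\Delta_v=F_{\mathrm{full}}(\theta_k)-F_{\mathrm{full}}(\theta_{k-1})$, which depends only on the prefix. The first task is to bound the increments. The chain from Propositions~\ref{prop:profile} and~\ref{prop:transfer} gives
\[
\sup_\theta\sum_k2^{k/2}\norm{\Delta_{v_k}}_2\less M
\]
after the triangle inequality over the four sign parts. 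I would then set
\[
 w_v=2^{k/2}\norm{\Delta_v}_2,\qquad W=\sup_h\sum_{v\in h}w_v\less M,\qquad c_v=w_v/W.
\]
With this choice \eqref{eq:block-coeff} holds. I would define the rank-one block $A_v:\R\to\R^{mn}$ by $A_v(1)=\Delta_v/\sqrt{c_v}$, and drop zero edges. Summing the increments along a branch telescopes, and together with continuity of $F_{\mathrm{full}}$ along the nested cells this yields \eqref{eq:block-image}. The cell diameters tend to zero, so limit branches are handled by passing to the limit.

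\emph{Operator budget.} A rank-one map from $\R$ has operator norm equal to its Hilbert--Schmidt norm, namely $\norm{\Delta_v}_2/\sqrt{c_v}$. Hence
\[
 2^{k}\norm{A_v}_\op^2=2^k\norm{\Delta_v}_2^2\,\frac{W}{w_v}=W\,2^{k/2}\norm{\Delta_v}_2=Ww_v .
\]
Summing along a branch gives at most $W^2\less M^2$, which is \eqref{eq:block-op}. This is the corrected budget: the weights are chosen so that the operator budget reproduces the $\gamma_2$ functional squared.

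\emph{Trace budget.} With rank-one blocks, \eqref{eq:block-trace} would demand $\sum_v\norm{\Delta_v}_2^2/c_v\less R^2$ with no $2^k$ weight. The plan here is to use the transfer structure. At level $k$ the increment inside a cell splits into three parts: the tangent part $T_b(x-y)$, the linearization remainder from Lemma~\ref{lem:remainder}, and the residual $\varepsilon_k$. I would group these into blocks by representing the tangent part through $T_b$ restricted to the local cover. Its Hilbert--Schmidt norm is then charged to $\lambda_{k-2}(z)$ via \eqref{eq:lambda-budget}. The other two parts would be charged to \eqref{eq:residual-budget} and \eqref{eq:Q-budget} with the choice $a=(R^2+P^2)^{-1/2}$.

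I expect this step to be the main obstacle. There are two difficulties: deciding whether the phrase "can also be chosen" allows a different hierarchy for \eqref{eq:block-trace}, and whether the $c_v$ must be shared with \eqref{eq:block-op}. I would first try a common refinement of the two hierarchies, with $c_v$ taken as the average of the two normalized weights. That choice loses only absolute constants in both budgets, and it keeps \eqref{eq:block-coeff} after halving.
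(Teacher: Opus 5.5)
Your construction, the weights $c_v=w_v/W$, and the computation $2^{n(v)}\norm{A_v}_{\op}^2=Ww_v$ coincide with the paper's. They do prove \eqref{eq:block-coeff}, \eqref{eq:block-image} and \eqref{eq:block-op}. The gap is \eqref{eq:block-trace}.

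\emph{Why the unmodified chain fails.} With the same $c_v$, the trace sum is $W\sum_{v\in h}2^{-n(v)/2}\norm{\Delta_v}_2$. On the unmodified chain this sum is dominated by the lowest levels. A first increment with $\norm{\Delta_v}_2\asymp R$ already costs about $WR$, and $W/R$ is not bounded by an absolute constant. For the identity matrix, $R\asymp1$ while $W\gtrsim\E\norm{G_A}_{p\to q}\asymp\sqrt{\Log n}$.

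\emph{Why re-weighting cannot help.} For rank-one blocks on fixed increments, Cauchy--Schwarz gives $\sum_{v\in h}\norm{\Delta_v}_2^2/c_v\ge(\sum_{v\in h}\norm{\Delta_v}_2)^2$ whenever $\sum_{v\in h}c_v\le1$. The budget $\sum_k2^{k/2}\norm{\Delta_{v_k}}_2\le W$ still allows increments of size $\asymp R$ at each of the roughly $\log(W/R)$ levels where $2^{k/2}R\le W$. So averaging two weight systems is harmless but beside the point: the hierarchy itself must change.

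\emph{Why the transfer-based charging does not work.} Charging through $\lambda_{k-2}$, $\eta_k$, $\varepsilon_k$ does not change the hierarchy, and it also does not fit the statement. The transfer proof only verifies the entropy hypothesis of the contraction principle. It yields no explicit chain whose increments split into tangent, remainder and residual pieces. Representing a tangent piece through $T_b$ would also produce blocks of rank larger than one, rather than one rank-one block per edge.

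\emph{The missing idea.} It is an elementary collapse of the initial levels. It uses only $\sup_\theta\norm{F_{\mathrm{full}}(\theta)}_2\le a_*\le R$ from Lemma~\ref{lem:trace}.
\begin{enumerate}
\item Enlarge $W$ to be at least $R$; this is harmless since $R\less M$.
\item Choose an integer $q\ge1$ with $2^q\asymp(W/R)^2$.
\item Replace all representatives at levels below $q$ by zero, and keep the chain from level $q$ on.
\end{enumerate}
The single new jump has weighted size $2^{q/2}\norm{z_q}_2\le2^{q/2}R\less W$. So the operator budget survives with a comparable constant $B$, and every nonzero edge now has $n(v)\ge q$. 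Hence, with $c_v=2^{n(v)/2}\norm{\Delta_v}_2/B$,
\[
 \sum_{v\in h}\norm{A_v}_{\HS}^2
 =B\sum_{v\in h}2^{-n(v)/2}\norm{\Delta_v}_2
 \le2^{-q}B\sum_{v\in h}2^{n(v)/2}\norm{\Delta_v}_2
 \le2^{-q}B^2\less R^2 .
\]
This also settles both of your worries. A single hierarchy with a single set of coefficients $c_v$ satisfies all four bounds, so no common refinement or averaging is needed.
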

\begin{proof}
Take the nested chain just described, and at each node choose a
representative from its original cell. Set the root representative to
zero, which belongs to $\mathcal T$. Write $z_k(\theta)$ for the
representative image at level $k$. The diameter bound gives convergence to
$F_{\mathrm{full}}(\theta)$ and
\begin{equation}\label{eq:increments}
 \sup_\theta\sum_{k\ge1}2^{k/2}
    \norm{z_k(\theta)-z_{k-1}(\theta)}_2\le B_0,
 \qquad B_0\less M.
\end{equation}
Increase $B_0$ if necessary so that $B_0\ge R$. For a nonzero matrix,
choose an integer $q\ge1$ with
\[
 2^q\asymp(B_0/R)^2.
\]
Replace the initial representatives through level $q-1$ by zero, and retain
the original representatives from level $q$ onward. This amounts to
collapsing the initial levels of the hierarchy; admissibility is preserved.
Every point of $\mathcal T$ has Euclidean norm at most $a_*\le R$.
The new jump at level $q$ therefore has weighted size at most
$2^{q/2}R\less B_0$. The remaining increments are unchanged. Thus the
new sequence has branchwise weighted increment sum at most $B\less B_0$.

For a nonzero increment $\Delta_v$ on the edge $v$, define
\begin{equation}\label{eq:block-definition}
 c_v=\frac{2^{n(v)/2}\norm{\Delta_v}_2}{B},
 \qquad
 A_v(t)=\frac{t\Delta_v}{\sqrt{c_v}}.
\end{equation}
For a zero increment set $c_v=0$ and $A_v=0$. The definition gives
\eqref{eq:block-coeff}, while telescoping gives \eqref{eq:block-image}.
Since these blocks have one-dimensional domains,
\begin{align*}
 \sum_{v\in h}2^{n(v)}\norm{A_v}_\op^2
 &=B\sum_{v\in h}2^{n(v)/2}\norm{\Delta_v}_2\le B^2,\\
 \sum_{v\in h}\norm{A_v}_\HS^2
 &=B\sum_{v\in h}2^{-n(v)/2}\norm{\Delta_v}_2
 \le2^{-q}B^2\less R^2.
\end{align*}
All finite prefixes satisfy the same bounds, so they extend to limit
branches. The zero matrix has the trivial representation.
\end{proof}

\begin{remark}[Exact preservation of the image]
The equality in \eqref{eq:block-image} is asserted for the prescribed
coefficient sequence on each original branch. Its range is exactly
$\mathcal T$. If one also includes every infinite branch of the finitely
branching tree, its representative images converge to a point of
$\mathcal T$, since the tail bound is uniform and $\mathcal T$ is compact.
Allowing arbitrary vectors in the unit ball of the direct sum of block
domains would generally enlarge the image; no such equality is used or
claimed here. The weighted budget is $CM^2$, which includes the Gaussian
maximum term and both exponent-dependent row and column terms.
\end{remark}


\begin{thebibliography}{9}
\raggedright
\bibitem{LS}
R. Lata\l a and M. Strzelecka,
\emph{Operator $\ell_p\to\ell_q$ norms of Gaussian matrices},
arXiv:2502.02186v3, 9 June 2026.
\href{https://arxiv.org/abs/2502.02186v3}{arxiv.org/abs/2502.02186v3}.
The conjecture numbering used here is that of this version.

\bibitem{LT}
R. Lata\l a and T. Tkocz,
\emph{A note on suprema of canonical processes based on random variables
with regular moments},
arXiv:1406.6584v1, 2014, Theorem~1.
\href{https://arxiv.org/abs/1406.6584v1}{arxiv.org/abs/1406.6584v1}.

\bibitem{vH}
R. van Handel,
\emph{Chaining, interpolation, and convexity II: the contraction principle},
arXiv:1610.05199, 2016.
\href{https://arxiv.org/abs/1610.05199}{arxiv.org/abs/1610.05199}.
\end{thebibliography}
\end{document}